\theoremstyle{plain}
\newtheorem{theorem}{Theorem}[section]
\newtheorem{corollary}[theorem]{Corollary}
\newtheorem{lemma}[theorem]{Lemma}
\newtheorem{proposition}[theorem]{Proposition}
\theoremstyle{definition}
\newtheorem{definition}[theorem]{Definition}
\numberwithin{equation}{section}
\newtheorem*{theorem*}{Theorem} 
\newcommand{\R}{{\mathbb R}}
\newcommand{\Z}{{\mathbb Z}}
\newcommand{\rn}{{\mathbb R}^{n}}
\newcommand{\sint}{\strokedint}
\DeclareMathOperator{\III}{III}
\DeclareMathOperator{\II}{II}
\DeclareMathOperator{\I}{I}
\DeclareMathOperator{\dive}{div}
\providecommand{\abs}[1]{ \left \lvert  #1 \right \rvert }
\providecommand{\absgg}[1]{ \bigg \lvert  #1 \bigg \rvert }
\providecommand{\no}[1]{  \lVert  #1  \rVert }
\providecommand{\nos}[1]{ \left \lVert  #1 \right \rVert }
\providecommand{\la}[1]{  \langle #1 \rangle}
\title[Sparse bounds]{Sparse gradient bounds for divergence form elliptic equations}
\date{\today}
\author{Olli Saari} 
\author{Hua-Yang Wang}
\author{Yuanhong Wei}
\address{Olli Saari: Departament de Matem\`atiques,
	Universitat Polit\`ecnica de Catalunya,
	Avinguda Diagonal 647, 08028 Barcelona,
	Catalunya, Spain}
\address{Institute of Mathematics of UPC-BarcelonaTech, Pau Gargallo 14, 08028 Barcelona, Catalunya, Spain}
\address{Centre de Recerca Matem\`atica, Edifici C, Campus Bellaterra, 08193 Bellaterra, Catalunya, Spain}
\address{Hua-Yang Wang: Academy of Mathematics and Systems Science, Chinese Academy of Sciences, Beijing 100190, P.R. China}
\address{University of Chinese Academy of Sciences, Beijing 100049, P.R. China}
\email{wanghuayang@amss.ac.cn}
\address{Yuanhong Wei: School of Mathematics, Jilin University, Changchun 130012, P.R. China}
\email{weiyuanhong@jlu.edu.cn}
\begin{document}

\begin{abstract}
We provide sparse estimates for gradients of solutions to divergence form elliptic partial differential equations in terms of the source data.
We give a general result of Meyers (or Gehring) type,
a result for linear equations with VMO coefficients
and a result for linear equations with Dini continuous coefficients.
In addition,
we provide an abstract theorem conditional on PDE estimates available.
The linear results have the full range of weighted estimates with Muckenhoupt weights as a consequence.
\end{abstract}

\maketitle

\section{Introduction}
Let $\theta \in (0,1)$. 
A family of cubes $\mathcal{Q}$ in $\rn$, $n \ge 1$,
is said to be $\theta$-sparse if for each $Q \in \mathcal{Q}$
there exists $E_Q \subset Q$ such that $|E_Q| \ge \theta |Q|$
and for all $x \in \rn$
\[
\sum_{Q \in \mathcal{Q}} 1_{E_Q}(x) \le 1 .
\]
An operator $T$ acting on some test function class is said to satisfy a $\theta$-sparse 
$(s,r)$-bound if for each pair of test functions $(f,g)$ there exists a $\theta$-sparse family $\mathcal{Q}$
such that
\begin{multline}
\label{intro-sparse}
\left|\int Tf(x) g(x) \, dx \right| \\
\le C \sum_{Q \in \mathcal{Q}} |Q| \left( \frac{1}{|3Q|} \int_{3Q} |f(x)|^{s} \, dx \right)^{1/s}\left( \frac{1}{|3Q|} \int_{3Q} |g(x)|^{r} \, dx \right)^{1/r} .
\end{multline}
Despite its repelling appearance,
a sparse bound is a useful estimate.
Its validity implies immediately $L^{p}$ bounds for $s<p<r'$. 
Another immediate consequence is the quantitative weighted estimate,
which we detail in  Proposition \ref{prop-weigths-quote}, quoted from \cite{MR3531367}.

Sparse bounds were originally introduced in the context of quantitative weighted estimates for Calder\'on--Zygmund singular integrals by Lerner \cite{MR3127380}. 
Their use led to astonishingly simple proofs of the $A_{2}$ theorem \cite{MR3625108,MR3484688,MR4058547},
whose original proofs in the cases of Beurling--Ahlfors transform \cite{MR1894362},
the Hilbert transform \cite{MR2354322} and 
also in the general case \cite{MR2912709}
are somewhat challenging.
Lerner's original work involved so-called sparse domination in Banach function spaces norms,
which was later improved to pointwise bounds \cite{MR3521084,MR4007575}.
The bounds in terms of duality pairings as in \eqref{intro-sparse}
were first used in \cite{MR3531367},
where their applicability outside of the theory of singular integral operators was demonstrated.
Although weaker than pointwise bounds,
they are extremely important as they can be used to model $L^{p}$ bounds on a restricted range
unlike the pointwise bounds.
This opened the door for using the sparse approach to many operators previously out of reach.
Since the publication \cite{MR2354322}, 
sparse bounds have been proved for the spherical maximal function \cite{MR4041115},
pseudodifferential operators \cite{MR4094458}, 
bilinear Hilbert transforms and generalizations \cite{MR3873113,MR4310287} and many other operators.
The literature has become very extensive,
and it is impossible to give an exhaustive list.

The purpose of the present paper is to study sparse bounds thematically close but historically far 
from their origins.
We consider solutions to Dirichlet problems to 
elliptic partial differential equations of the form 
\[
\dive a(x,\nabla u(x)) = \dive F(x)
\]
where $F \in L^{2}_{loc}(\rn)$ is a source term
and $a$ a coefficient function as in Definition \ref{def:nonlinearity}.
Estimating $|\nabla u|$ in terms of $|F|$ is the topic of the other branch of Calder\'on--Zygmund theory.
The literature on various $L^{p}$ and weighted $L^{p}$ bounds
for various coefficient functions $a$ is again too vast to be listed.
In addition to the linear theory,
there has been a number of works studying related questions
for nonlinear equations \cite{MR0722254} and
nonlinear potential theory \cite{MR2823872,MR3004772}, 
and there have been works addressing more and more irregular coefficients such as \cite{MR2069724}.
To the best of our knowledge,
however,
not too many sparse bounds are known in the context of differential equations.
Results in \cite{MR3531367} do not deal with the exact setup we are interested in 
and the results in \cite{MR4094458} only deal with pseudodifferential operators whose symbols are too smooth to really be applied here. 
Hence we try to give an overview on phenomena in Calder\'on--Zygmund theory of linear and nonlinear equations from the sparse point of view. 

Our first main result is a sparse analogue of Meyers' estimate,
which deals with source data almost in the natural energy space.
Here the setup is completely nonlinear.
We do not assume any spatial smoothness on the coefficients
and consequently this is a result that does not have an obvious analogue 
in the point of view based on kernels.
See Section \ref{sec:preliminaries} for precise definitions.
\begin{theorem}
\label{intro-nl}
Let $\theta \in (0,1)$, $0 < \lambda \le \Lambda < \infty$, $Q_0$ be a cube,
and a $(\lambda,\Lambda)$-elliptic coefficient $a$ in $Q_0$ be given.
Let $\Omega \subset Q_0$ be a Lipschitz domain.  
There exists $q = q(\lambda,\Lambda,n,\Omega) > 2$ such that the following holds.

Let $p \in (2,q]$, $F \in L^{p}(3Q_0;\rn)$ and $f \in L^{p_{*}}(3Q_0)$ 
where $p_{*} =np/(n+p)$.
Let $u \in W^{1,2}_0(\Omega)$ be a solution to
\begin{align*}
\dive a(x,\nabla u(x)) &= \dive F(x) + f(x), \quad x \in \Omega 
\end{align*} 
and let $g \in L^{\infty}(3Q_0)$ be non-negative.

Then there exists a $(1-\theta)$-sparse family $\mathcal{P}$ of cubes $P$
such that for all $P \in \mathcal{P}$ it holds $3P \subset 3Q_0$ 
and
\begin{multline*}
\int_{\Omega} |\nabla u (x)|g(x) \, dx
\le C \sum_{P \in \mathcal{P}} |P| \la{|F|}_{3P,2}\la{g}_{P,q'} \\
 + C \sum_{P \in \mathcal{P}} |P| 3\ell(P) \la{|f|}_{3P,2_{*}} \la{g}_{P,q'} 
\end{multline*}
where $C= C(\lambda,\Lambda,n,q,\Omega,\theta)$.
\end{theorem}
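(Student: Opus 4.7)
The plan is to combine Meyers-type higher integrability with a Calder\'on--Zygmund stopping-time construction of the sparse family, together with a local comparison that absorbs the gradient term from the right-hand side of Meyers' inequality into source-only norms. First, I would use Caccioppoli's inequality, Sobolev--Poincar\'e, and Gehring's self-improving lemma to obtain $q=q(\lambda,\Lambda,n,\Omega)>2$ with
\[
\la{|\nabla u|}_{P,q} \le C\bigl[\la{|\nabla u|}_{3P,2} + \la{|F|}_{3P,2} + \ell(P)\la{|f|}_{3P,2_{*}}\bigr]
\]
for every cube $P$ with $3P\subset 3Q_0$. This is the regularity that enables H\"older pairing against $\la{g}_{P,q'}$.

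Next, I set $\Phi(R):=\la{|\nabla u|}_{3R,2}+\la{|F|}_{3R,2}+\ell(R)\la{|f|}_{3R,2_{*}}$ and initialize the stopping family with $P_0=Q_0\in\mathcal{P}$. Recursively, the children of $P\in\mathcal{P}$ will be the maximal dyadic subcubes $P'\subsetneq P$ satisfying $\Phi(P')>A\,\Phi(P)$. Each of the three terms defining $\Phi$ is pointwise dominated by an $L^2$-averaged maximal function, so the $L^2$ boundedness of Hardy--Littlewood and Chebyshev's inequality force $\sum_{P'\text{ child}}|P'|\le\theta|P|$ once $A$ is chosen large enough in terms of $\theta$, proving $(1-\theta)$-sparseness of $\mathcal{P}$.

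For $P\in\mathcal{P}$ let $E_P:=P\setminus\bigcup_{P'\text{ child}}P'$. H\"older's inequality combined with Meyers gives
\[
\int_{E_P}|\nabla u|\,g\,dx \le |P|\,\la{|\nabla u|}_{P,q}\,\la{g}_{P,q'} \le C|P|\,\Phi(P)\,\la{g}_{P,q'}.
\]
To replace $\la{|\nabla u|}_{3P,2}$ inside $\Phi(P)$ by source-only terms, I would compare $u$ with a local solution on $3P$: pick $w_P\in W^{1,2}_0(3P)$ solving the same PDE on $3P$; monotonicity of $a$ and the Sobolev embedding yield the energy estimate $\la{|\nabla w_P|}_{3P,2}\le C[\la{|F|}_{3P,2}+\ell(P)\la{|f|}_{3P,2_{*}}]$. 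The remainder $v_P=u-w_P$ satisfies a source-free equation on $3P$ (literally $a$-harmonic in the linear case; for nonlinear $a$, via the perturbed operator $\tilde a(x,p):=a(x,\nabla v_P+p)-a(x,\nabla v_P)$, which is still $(\lambda,\Lambda)$-elliptic), so Meyers applied to $v_P$ produces no source contribution, and together with the global energy estimate on $Q_0$ (the base case of an induction up the sparse tree) this allows $\la{|\nabla u|}_{3P,2}$ to be replaced by $\la{|F|}_{3P,2}+\ell(P)\la{|f|}_{3P,2_{*}}$. Summing over $P\in\mathcal{P}$ and using that $\{E_P\}$ partitions $\Omega$ modulo a null set then yields the stated sparse inequality.

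The hardest part will be the absorption step. For genuinely nonlinear $a$ the remainder $v_P$ is not literally $a$-harmonic, and the argument must be carried through the perturbed monotone operator $\tilde a$, keeping uniform control of the ellipticity constants and the Meyers exponent. In parallel, boundary cubes $P$ whose triple $3P$ is not contained in $\Omega$ require an analogous treatment based on a reflection/extension argument using the Lipschitz regularity of $\partial\Omega$, so that all local estimates pass uniformly in $P$.
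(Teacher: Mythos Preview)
Your overall architecture---Meyers-type reverse H\"older plus a stopping-time construction, with a linearization to handle nonlinear $a$---matches the paper's. The gap is the absorption step. Your decomposition $u=w_P+v_P$ controls $\la{|\nabla w_P|}_{3P,2}$ by local source data, but $v_P$ solves a homogeneous equation on $3P$ with boundary values $u|_{\partial(3P)}$, and Meyers for such a $v_P$ only relates different $L^r$ averages of $|\nabla v_P|$ to one another; it never bounds $\la{|\nabla v_P|}_{3P,2}$ by the source. Hence $\la{|\nabla v_P|}_{3P,2}$ still carries the full $\la{|\nabla u|}_{3P,2}$, and nothing has been gained. The proposed induction up the sparse tree does not help either: stopping-maximality together with doubling yields only $\Phi(P)\le C_nA\,\Phi(\hat P)$ for the sparse-parent $\hat P$, hence $\Phi(P)\le(C_nA)^{k}\Phi(Q_0)$ after $k$ generations, an exponentially growing bound that cannot produce the claimed sparse form. (A minor point: your perturbed operator should read $\tilde a(x,p)=a(x,\nabla w_P+p)-a(x,\nabla w_P)$, with $w_P$ rather than $v_P$.)

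The paper's remedy is to \emph{change the function at every level} of the iteration instead of tracking the fixed $u$. One introduces the local zero-Dirichlet solution $u_Q\in W^{1,2}_0(O_Q\cap\Omega)$ of the same equation on a smoothed $3Q$ and iterates the quantity $L(Q)=\int_Q|\nabla u_Q|\,g$. The energy estimate then bounds $\la{|\nabla u_Q|}_{3Q,2}$ directly by source data on $3Q$, so the stopping threshold---taken on $M(|\nabla u_Q|^2)$---is already a source-only quantity. On a stopping cube $P$ one compares $u_Q$ with $u_P$: both solve the same inhomogeneous equation on $O_P$, their difference solves a homogeneous linear equation with coefficient $A(x)=\int_0^1\nabla_\xi a(x,t\nabla u_Q+(1-t)\nabla u_P)\,dt$, and Gehring's reverse H\"older for that difference, combined with stopping-maximality for $\la{|\nabla u_Q|}_{3P,2}$ and an energy estimate for $\la{|\nabla u_P|}_{3P,2}$ (viewing $a(\cdot,\nabla u_Q)$ as source), closes the estimate. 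The iteration then telescopes cleanly as $L(Q)\le C|Q|\bigl(\la{|F|}_{3Q,2}+\ell(Q)\la{|f|}_{3Q,2_*}\bigr)\la{g}_{Q,q'}+\sum_{P}L(P)$, with no residual gradient term to absorb.
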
 
Note that by H\"older's inequality and a maximal function argument, 
the sparse estimate above implies 
\[
\no{\nabla u}_{L^{p}(\Omega)} \le C \no{F}_{L^{p}(\Omega)} + C \no{f}_{L^{p_{*}}(\Omega)}.
\]
Estimates of this latter type were first proved by Bojarski \cite{Bojarski1957},
Meyers \cite{Meyers1963}, Elcrat--Meyers \cite{Meyers1975} and Gehring \cite{Gehring1973}.
The $L^{p}$ theory for general nonlinear equations
with less restricted range of $p$ is quite well developed,
and we refer to \cite{MR2069724} as an example of a more general result with BMO coefficients.
When it comes to sparse estimates,
however,
a peculiarity of the linearization argument we use 
impedes us from advancing beyond the Meyers type result in the nonlinear setting.
No matter how smooth the coefficient function is, 
we end up with a linearization that has no a apriori smoothness beyond measurability.

In the setting of linear equations,
a duality argument allows us to make the range of admissible $(s,r)$
for a sparse bound a symmetric rectangle centred at $(1/2,1/2)$. 
We postpone a more quantitative statement to the bulk of the paper,
but here we give an example of what we can say about relatively good equations.
We start with the case where all $L^{p}$-bounds are known to hold.
Recall that a coefficient matrix $A=[A_{ij}]_{n\times n}$ is said to be of vanishing mean oscillation (see \cite{MR0377518}) if 
for each $A_{ij}$ with $1\le i,j \le n$
it holds 
\[
\lim_{s \to \infty} \mathcal{V}_{s}(A_{ij}) < \infty, \quad 
\lim_{s \to 0} \mathcal{V}_{s}(A_{ij}) = 0
\]
where 
\[
\mathcal{V}_s(A_{ij}) = \sup_{x \in \rn, r \le s } \inf_{c \in \R} \frac{1}{r^{n}} \int_{Q_r(x)} | A_{ij}(y) - c| \, dy.
\]
Here we denote  
\[
Q_r(x) =  \prod_{i=1}^{n} [x_i - 2^{-1}r, x_i + 2^{-1}r ].
\]
\begin{theorem}
\label{intro-lin}
Let $\theta \in (0,1)$, $0 < \lambda \le \Lambda < \infty$, $Q_0$ be a cube,
and a linear $(\lambda,\Lambda)$-elliptic coefficient $A$ in $Q_0$ be given. 
Let $\Omega \subset Q_0$ be a $C^{2}$-domain.
Assume that $A$ is of vanishing mean oscillation.
Let $q \in (2,\infty)$ be given.
Let $p \in (2,q]$ and $F \in L^{p}(3Q_0;\rn)$.
Let $u \in W^{1,2}_0(\Omega)$ be a solution to
\begin{align*}
\dive A(x)\nabla u(x) &= \dive F(x) , \quad x \in \Omega
\end{align*} 
and let $g \in L^{\infty}(3Q_0)$ be non-negative.

Then there exists a $(1-\theta)$-sparse family $\mathcal{P}$ of cubes $P \subset Q_0$
such that for all $P \in \mathcal{P}$ it holds $3P \subset 3Q_0$ and
\[
\int_{\Omega} |\nabla u (x)|g(x) \, dx
\le C \sum_{P \in \mathcal{P}} |P| \la{|F|}_{3P,q'}\la{g}_{P,q'} 
\]
where $C= C(A,n,q,\Omega,\theta)$.
\end{theorem}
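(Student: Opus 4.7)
The plan is to derive Theorem~\ref{intro-lin} from the abstract conditional sparse result alluded to in the abstract, feeding it the classical full-range $L^{q}$ Calder\'on--Zygmund estimate for linear divergence-form elliptic equations with VMO coefficients. The key PDE input is: for every $q \in (1,\infty)$, every cube $Q$ and every weak solution $w \in W^{1,2}_0(\Omega)$ of $\dive(A\nabla w) = \dive G$ in $3Q \cap \Omega$, the classical theory of Di~Fazio, Iwaniec, and Caffarelli--Peral yields
\[
\la{|\nabla w|}_{Q, q} \le C \la{|G|}_{3Q, q} + C \la{|\nabla w|}_{3Q, 2},
\]
with $C$ depending on $\lambda,\Lambda,n,q$, the VMO modulus of $A$, and the $C^{2}$-modulus of $\partial\Omega$. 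The $C^{2}$-hypothesis is invoked precisely to flatten $\partial\Omega$ so that this estimate holds uniformly up to the boundary; unlike the Meyers exponent governing Theorem~\ref{intro-nl}, there is now no ceiling on the admissible $q$.

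With this ingredient in hand, the sparse form is constructed by a Lerner-style Calder\'on--Zygmund stopping-time scheme. Starting from $Q_0$, I would select the maximal dyadic subcubes $P \subset Q_0$ for which either $\la{|F|}_{3P,q'} > \kappa \la{|F|}_{3Q_0,q'}$ or $\la{g}_{P,q'} > \kappa \la{g}_{Q_0,q'}$, where $\kappa = \kappa(\theta,q)$ is chosen via the weak-type $L^{q'}$-bound of the Hardy--Littlewood maximal function so that the stopping cubes cover at most a fraction $\theta|Q_0|$. On the complementary good set $E_{Q_0}$, I would use linearity to split $u = u_1 + u_2$ inside $3Q_0 \cap \Omega$, where $u_1$ solves the localized equation with data $F \cdot 1_{3Q_0}$ and $u_2$ satisfies the homogeneous equation with matching boundary data; applying the Calder\'on--Zygmund estimate at exponent $q$ to $\nabla u_1$, a local off-diagonal decay estimate for $\nabla u_2$, and pairing with $g$ through H\"older's inequality and the stopping rule, one obtains the single-cube bound
\[
\int_{E_{Q_0}} |\nabla u| g \, dx \le C |Q_0| \la{|F|}_{3Q_0,q'} \la{g}_{Q_0,q'}.
\]
Iterating the construction on the stopping cubes and taking unions produces the $(1-\theta)$-sparse family of the conclusion.

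The main obstacle I expect is the simultaneous stopping-time selection at the common exponent $q'$ for both $|F|$ and $g$: one needs to verify that the merged collection of stopping cubes is genuinely $(1-\theta)$-sparse and not merely Carleson, which requires a careful calibration of $\kappa$ against the weak-type maximal function bound. The \emph{symmetric} range $(q',q')$ on both $F$ and $g$, in contrast with the asymmetric $(2,q')$ of Theorem~\ref{intro-nl}, is exactly what linearity buys: the splitting $u = u_1 + u_2$ incurs no ``frozen-coefficient'' linearization loss, and the same $L^q$-estimate applies to the adjoint equation $\dive(A^T\nabla v) = \dive H$, so one is free to measure the source at any integrability exponent rather than being pinned to $L^{2}$. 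Boundary cubes are handled uniformly via the $C^{2}$-flattening of $\partial\Omega$, and no finer regularity of $A$ than VMO enters.
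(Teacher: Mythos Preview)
Your instinct to feed the VMO regularity theory into the paper's abstract conditional theorem is correct, and this is exactly what the paper does. However, the paper's proof is a two-line application of Theorem~\ref{thm-global-sparse-gen}, not a fresh stopping-time construction: one checks that every $q>2$ is an admissible \emph{upper} exponent $q_h$ in the sense of Definition~\ref{def:upper_admissible} (a reverse H\"older inequality for gradients of homogeneous solutions up to the boundary, quoted from Kinnunen--Zhou), and that every $q'\in(1,2)$ is an admissible \emph{lower} exponent $q_l$ in the sense of Definition~\ref{def:lower_admissible} (global $L^{q'}$ solvability of the Dirichlet problem on $O_P\cap\Omega$, quoted from Di~Fazio). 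Plugging $q_l=q'$ and $q_h=q$ into the abstract theorem yields the $(q',q')$ sparse form directly. The symmetry is therefore obtained not via the adjoint equation or a duality trick, but simply because VMO regularity unlocks the full range of lower exponents, so one may take $q_l$ all the way down to $q'$; the abstract machine then outputs averages of $|F|$ at $q_l=q'$ and of $g$ at $q_h'=q'$.

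Your alternative scheme has two gaps. First, the abstract theorem stops on the maximal function of $|\nabla u|^{q_l}$, not on the data $F$ and $g$: this is what gives the pointwise bound $|\nabla u|\le D$ on the good set $Q\setminus\bigcup\mathcal{P}$, which is the entire content of term~$\I$. Stopping on $F$ and $g$ alone does not control $|\nabla u|$ on $E_{Q_0}$ without an additional pointwise or weak-type estimate of the form $|\nabla u|\lesssim M_{q'}F$ a.e., which does not follow from the local higher-integrability estimate you wrote down. Second, the PDE input you state, namely $\la{|\nabla w|}_{Q,q}\le C\la{|G|}_{3Q,q}+C\la{|\nabla w|}_{3Q,2}$, after pairing with $g$ at $L^{q'}$, leaves the $G$-average at exponent $q$, not $q'$; a direct application would give a $(q,q')$ sparse bound, which is strictly weaker than the $(q',q')$ claimed. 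The paper avoids this by separating the two roles: the $q'$-average on $F$ comes from the global Dirichlet $L^{q'}$ solvability (the stopping threshold $D\sim\la{|F|}_{3Q,q'}$), while the $q'$-average on $g$ comes from the reverse H\"older at exponent $q$ applied only to the homogeneous difference $\nabla u-\nabla u_P$ in term~$\II$. Your single PDE estimate conflates these two ingredients.
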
 
This is a sparse variant of the results of Di Fazio \cite{MR1405255},
Iwaniec--Sbordone \cite{MR1631658} and Kinnunen--Zhou \cite{MR1720770,MR1799417} (linear case only).
We have omitted the function $f$ on the source data for simplicity.
It could be included,
but additional restrictions on the exponents would appear
and make reading more difficult.

The sparse bound as above has a corollary easy to state in terms of Muckenhoupt weights.
Recall that a locally integrable function $w \ge 0$ is said to be an $A_{p}$ weight with $1<p<\infty$ if 
\[
[w]_{A_p} := \sup_{Q} \left( \frac{1}{|Q|}\int_{Q} w(x) \, dx \right)  \left( \frac{1}{|Q|}\int_{Q} w(x)^{-1/(p-1)} \, dx \right)^{p-1} < \infty ,
\]
where the supremum is taken over all axis parallel cubes in $\rn$.
A version of Corollary \ref{intro-lin-cor-vmo} with continuous coefficient matrix
can be found as Theorem 2.5 in \cite{MR3531368} 
and with VMO regular coefficient matrix as Theorem 2.1 in \cite{MR4221600}.
These smoothness conditions are not optimal,
and further generalizations can be found for instance in \cite{MR3812104},
where coefficients with small BMO norm are treated.

\begin{corollary}
\label{intro-lin-cor-vmo}
Let $A$ be a $(\lambda,\Lambda)$-elliptic coefficient matrix in $Q_0$.
Let $\Omega \subset Q_0$ be a $C^{2}$-domain. 
Assume that $A$ is of vanishing mean oscillation.
If $p \in (1,\infty)$, $w \in A_{p}$, $F \in L^{p}(\Omega,wdx)$
and $u \in W_0^{1,2}(\Omega)$ is a solution to
\begin{align*}
\dive A(x)\nabla u(x) &= \dive F(x)  , \quad x \in \Omega,
\end{align*} 
then
\[
\no{\nabla u}_{L^{p}(\Omega,wdx)} \le C \no{F}_{L^{p}(\Omega,wdx)}  
\]
where $C= C(A,p,[w]_{A_p},n, \Omega)$.
\end{corollary}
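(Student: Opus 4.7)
The plan is to derive Corollary \ref{intro-lin-cor-vmo} as a direct consequence of the sparse bound in Theorem \ref{intro-lin} combined with the sparse-to-weighted transference principle of Proposition \ref{prop-weigths-quote}.

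Fix $p \in (1,\infty)$ and $w \in A_{p}$. First I would invoke the openness of the Muckenhoupt classes to obtain $\varepsilon_{0}=\varepsilon_{0}(p,[w]_{A_{p}})>0$ such that $w \in A_{p-\varepsilon_{0}}$ and the dual weight $w^{1-p'}$ lies in $A_{p'-\varepsilon_{0}}$. This self-improvement is what bridges the gap between the hypothesis $w \in A_{p}$ and the strictly stronger Muckenhoupt conditions required to feed the sparse bound into Proposition \ref{prop-weigths-quote}.

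Next I would choose $q \in (2,\infty)$ large, depending only on $p$ and $\varepsilon_{0}$, so that simultaneously $q'<p<q$, $p/q' \ge p-\varepsilon_{0}$, and $p'/q' \ge p'-\varepsilon_{0}$. These three inequalities are compatible whenever $q$ exceeds an explicit threshold of the form $\max(p,p',p/\varepsilon_{0},p'/\varepsilon_{0})$. With such a $q$ fixed, I would enlarge $Q_{0}$ so that $\Omega \subset Q_{0}$ and apply Theorem \ref{intro-lin}: for any $F$ of sufficient integrability for Theorem \ref{intro-lin} to apply and any non-negative $g \in L^{\infty}(3Q_{0})$, there is a sparse family $\mathcal{P}$ with
\[
\int_{\Omega} |\nabla u(x)| g(x) \, dx \le C \sum_{P \in \mathcal{P}} |P| \la{|F|}_{3P,q'} \la{g}_{P,q'},
\]
which is precisely a sparse $(q',q')$-bound for the bilinear form $(F,g)\mapsto \int_{\Omega}|\nabla u|g$.

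The final step is to feed this sparse bound into Proposition \ref{prop-weigths-quote} with $s=r=q'$. The admissible open $L^{p}$-range $(q',q)$ of that proposition contains our $p$ by construction, while the Muckenhoupt conditions $w \in A_{p/q'}$ and $w^{1-p'} \in A_{p'/q'}$ it demands are precisely those secured in the self-improvement step. This yields $\|\nabla u\|_{L^{p}(\Omega,w\,dx)} \le C\|F\|_{L^{p}(\Omega,w\,dx)}$ with $C$ depending only on $A,n,p,[w]_{A_{p}},\Omega$, and a routine truncation/density argument removes the auxiliary integrability assumption on $F$. The only thing that demands real care is the bookkeeping of the Muckenhoupt exponents, namely verifying that the single parameter $q$ can be chosen so that the sparse $(q',q')$-bound indeed transfers to the specific weight $w$ at hand; this is where the reverse H\"older self-improvement of $A_{p}$ is used, and beyond that standard fact there is no new conceptual obstacle, since the heavy lifting was already carried out in Theorem \ref{intro-lin}.
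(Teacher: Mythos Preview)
Your approach is correct in outline and differs from the paper's in one structural respect: the paper first establishes the weighted $L^{2}$ bound for $w\in A_{2}$ (choosing $q'$ from the self-improvement $w\in A_{2-\varepsilon}\cap RH_{1+\varepsilon}$) and then invokes Rubio de Francia extrapolation to reach all $p\in(1,\infty)$ and $w\in A_{p}$, whereas you argue directly at the given exponent $p$. Both routes are standard; the paper's is marginally tidier because at $p=2$ the $A_{p/r}$ and reverse H\"older conditions of Proposition~\ref{prop-weigths-quote} become symmetric and easy to read off, while your route avoids quoting the extrapolation theorem.

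Two slips in the execution. First, ``$s=r=q'$'' is incompatible with the hypothesis $r\le 2<s$ of Proposition~\ref{prop-weigths-quote}: since the sparse form carries both exponents equal to $q'$, you mean $r=q'$ and $s'=q'$, i.e.\ $s=q$, which is indeed what your stated range $(q',q)$ reflects. Second, the hypotheses you attribute to Proposition~\ref{prop-weigths-quote} (namely $w\in A_{p/q'}$ and $w^{1-p'}\in A_{p'/q'}$) are not how that proposition is phrased; it requires a reverse H\"older condition on $w$ in addition to $w\in A_{p/r}$, and you never verify the $RH$ part. This is easy to add: $w\in A_{p}$ forces $w\in RH_{1+\delta}$ for some $\delta=\delta(n,[w]_{A_{p}})>0$, and the reverse H\"older exponent demanded by the proposition tends to $1$ as $q\to\infty$, so enlarging $q$ once more (now also in terms of $\delta$) secures it. With that amendment your argument goes through.
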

We do not have good control on how the bound depends on the $A_{p}$ constant.
We refer to Proposition \ref{prop-weigths-quote} for some estimates in this setting.

Next we turn to the setting of Dini continuous coefficients,
where we  get exactly as good results as in the singular integral theory of Dini continuous Calder\'on--Zygmund kernels. 
Recall that $\omega : [0,1) \to [0,\infty)$ is said to be a Dini function if 
there are constants $c_1,c_2 >0$ such that whenever $0<s< t <2s < 1$,
it holds 
\[
c_1 \omega(s) \le \omega(t) \le c_2 \omega(2s)
\]
and for all $t \in (0,1)$
\[
\int _{0}^{t} \frac{\omega(s)}{s} \, ds < \infty.
\]
A function $g : 3Q_0 \to \R$ is said to be of $\omega$-Dini mean oscillation if 
\[
\omega_g(r) = \sup_{x \in \Omega}  \sint_{Q_r(x) \cap \Omega} |g(y) - \la{g}_{Q_{r}(x)\cap \Omega}|\,dy
\]
is a Dini function. 
 
\begin{theorem}
\label{intro-smooth}
Let $\theta \in (0,1)$, $0 < \lambda \le \Lambda < \infty$ and let $Q_0$ be a cube.
Let $\Omega \subset Q_0$ be a $C^{2}$-domain.
Let a linear $(\lambda,\Lambda)$-elliptic coefficient $A$ with Dini mean oscillation in $\Omega$ be given.   
Assume $\omega_A(r) \le c |\log r|^{-2}$ for some $c$ and all $r \in (0,1/2)$.
Let $F \in L^{p}(3Q_0;\rn)$ for some $p \in (1,\infty)$.
Let $u \in W^{1,2}_0(\Omega)$ be a solution to
\begin{align*}
\dive A(x) \nabla u(x) &= \dive F(x) , \quad x \in \Omega
\end{align*} 
and let $g \in L^{\infty}(3Q_0)$ be non-negative.

Then there exists a $(1-\theta)$-sparse family $\mathcal{P}$ of cubes $P$
such that $3P \subset 3Q_0$ and
\[
\int_{\Omega} |\nabla u (x)|g(x) \, dx
\le C \sum_{P \in \mathcal{P}} |P| \la{|F|}_{3P,1}\la{g}_{P,1} 
\]
where $C$ depends on $(\lambda,\Lambda,n,\Omega,\theta)$ and the Dini data of $A$.
\end{theorem}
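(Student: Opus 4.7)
The plan is to prove the sparse $(1,1)$-bound by a stopping time iteration combined with regularity estimates for the equation that play the role of Calder\'on--Zygmund kernel bounds. For Dini continuous coefficients satisfying the logarithmic improvement $\omega_A(r) \le c\abs{\log r}^{-2}$, the solution map $F \mapsto \nabla u$ behaves qualitatively like a Calder\'on--Zygmund operator with a Dini regular kernel, so the endpoint sparse machinery developed for such operators should transfer to the PDE setting after appropriate boundary modifications. I would therefore aim to verify the PDE hypotheses of a general sparse domination scheme (of the type described earlier in the paper's abstract theorem) rather than run the stopping argument from scratch.

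First, for each cube $P$ with $3P \subset 3Q_0$ I would split $F = F\mathbf{1}_{3P} + F\mathbf{1}_{(3P)^c}$ and correspondingly write $u = v + w$, where $v \in W^{1,2}_0(\Omega)$ solves $\dive(A\nabla v) = \dive(F\mathbf{1}_{3P})$ and $w := u-v$ solves the equation driven by the far-away data. The local piece $v$ should satisfy a weak type endpoint bound
\[
\abs{\{x \in P : \abs{\nabla v(x)} > \lambda\}} \le \frac{C}{\lambda}\int_{3P}\abs{F}\,dx ,
\]
which follows from the $L^{1}$-theory for divergence form equations with Dini regular coefficients. The far-field piece $w$, which is a weak solution of the homogeneous equation on $3P \cap \Omega$ with the far data encoded only in the boundary values it inherits from $u-v$, should enjoy a Campanato-type oscillation bound. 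This is obtained by freezing the coefficient at an interior point $x_0$, comparing with the constant-coefficient problem for which $C^{1,\alpha}$-regularity is classical, and absorbing the error using $\omega_A$; the output is an estimate of the form
\[
\sup_{Q \subset P}\la{\abs{\nabla w - \la{\nabla w}_Q}}_{Q,1} \le C\bigl(\la{\abs{F}}_{3P,1}+\la{\abs{\nabla u}}_{3P,1}\bigr) ,
\]
with analogous bounds at all smaller scales, which is the PDE analogue of the Dini regularity of a Calder\'on--Zygmund kernel.

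With these two inputs the sparse family is built by the standard recursive procedure. Starting from a fixed parent $P$, I select its maximal dyadic descendants $P'$ on which either the averaged data $\la{\abs{F}}_{3P',1}$ or the bad set where the local weak type bound for $\nabla v$ is saturated exceeds a large multiple of the corresponding quantity on $P$. On the complement $E_P := P \setminus \bigcup P'$, the Campanato bound for $\nabla w$ and the weak-$(1,1)$ bound for $\nabla v$ combine to give $\abs{\nabla u(x)}$ bounded pointwise by $C\la{\abs{F}}_{3P,1}$ plus a small set; pairing against $g$ and introducing a symmetric stopping on $g$ relative to its $L^{1}$ means yields
\[
\int_{E_P}\abs{\nabla u}g\,dx \le C\abs{P}\la{\abs{F}}_{3P,1}\la{g}_{P,1} .
\]
Summing over $P$ and exploiting $\sum_P \mathbf{1}_{E_P} \le 1$ delivers the desired sparse bound with $(1-\theta)$-sparsity once the stopping thresholds are chosen large enough in terms of $\theta$.

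The main technical obstacle is ensuring that the Dini perturbation errors telescope over all the scales appearing in the iteration. Each freezing step contributes an error comparable to $\omega_A(2^{-k})$ at scale $2^{-k}$, and the recursive construction stacks such errors through a number of scales of the same order as a logarithm of the size of the region where the stopping is carried out. The accumulated error is thus of order $\sum_k k\,\omega_A(2^{-k})$, and the quadratic logarithmic decay $\omega_A(r) \le c\abs{\log r}^{-2}$ is precisely what makes this series converge. A secondary difficulty is the boundary analysis: for cubes $P$ that meet $\partial \Omega$, the Campanato bound for $\nabla w$ has to be replaced by its boundary counterpart, which relies on the $C^{2}$ regularity of $\partial \Omega$ and a flattening change of variables, and some care is required to check that the flattening does not destroy the Dini modulus quantitatively.
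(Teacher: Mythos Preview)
Your overall plan --- weak-type control of a localized piece, regularity of a homogeneous piece, then a stopping recursion --- is the right skeleton and matches the paper's, but there is a genuine gap in how you intend to close the iteration, and the paper's execution differs from yours in a way that matters.

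First, the abstract scheme in Theorem~\ref{thm-global-sparse-gen} cannot deliver the $(1,1)$ endpoint. That theorem requires an admissible lower exponent $q_l\in(1,2]$ and an admissible upper exponent $q_h\in(2,\infty)$, and only yields a $(q_l,q_h')$-sparse bound; the iteration in Lemma~\ref{lemma-iteration-step} uses the \emph{strong} estimate $\no{\nabla u_P}_{L^{q_l}}\lesssim\no{F}_{L^{q_l}}$ and the maximal function of $|\nabla u|^{q_l}$, both of which break at $q_l=1$, where only a weak-type bound is available. So the paper runs a separate endpoint argument (Section~\ref{s:dini-coefficients}), feeding in the weak-type $(1,1)$ of Dong--Escauriaza--Kim (Proposition~\ref{prop:weak-type-DEK}) and the $L^\infty$ gradient bound for homogeneous solutions (display~\eqref{holder-to-boundary}) as black boxes. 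Your discussion of telescoping Dini errors across scales is the heuristic behind \emph{their} proof of the weak-type bound, but in the sparse argument itself the condition $\omega_A(r)\le c|\log r|^{-2}$ appears nowhere; it is fully absorbed into Proposition~\ref{prop:weak-type-DEK}.

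Second, and more substantively, the paper does not truncate the source and keep the global $u$. Instead it solves local Dirichlet problems: $u_P\in W^{1,2}_0(O_P\cap\Omega)$ solves $\dive A\nabla u_P=\dive F$ in $O_P\cap\Omega$, and the iteration is on $L(Q)=\int_Q|\nabla u_Q|\,g$. The difference $w_P=u_Q-u_P$ is homogeneous in $O_P\cap\Omega$, and both $u_P$ and $u_Q$ obey weak-type bounds in terms of \emph{local} integrals of $F$. In your scheme $u=v+w$, the Campanato-type control of $\nabla w$ is in terms of $\la{|\nabla u|}_{3P,1}$ for the global $u$, and that quantity is not governed by $\la{|F|}_{3P,1}$; your stated stopping conditions (on $F$-averages and on the bad set for $\nabla v$) do not close this. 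The paper closes the analogous loop by introducing the auxiliary function
\[
S(x)=\sup_{P\in\mathcal{D}(3Q)}1_P(x)\,\osc_P\nabla w_P,
\]
proving in Proposition~\ref{aux-max-prop} that $S$ is weak-type $(1,1)$ in terms of $\int|F|$ (via \eqref{holder-to-boundary}, Kolmogorov, and the weak-type for $\nabla u_Q$), and then including $S$ together with $|\nabla u_Q|$ and $MF$ in the stopping set $\Xi$ of Lemma~\ref{lemma-iteration-step-smooth}. The existence of a point $x_P\in P\setminus\Xi$ with $|\nabla u_P(x_P)|$ also small then pins down $\nabla w_P$ pointwise on $P$. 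This $S$ is precisely the PDE analogue of the Lerner--Ombrosi grand maximal truncation, and it is the ingredient missing from your outline.
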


\begin{corollary}
\label{intro-lin-cor-hoelder}
Let $A$ be a $(\lambda,\Lambda)$-elliptic coefficient matrix in $3Q_0$. 
Let $\Omega \subset Q_0$ be a smooth domain.
Assume $\omega_A(r) \le c |\log r|^{-2}$ for some $c$ and all $r \in (0,1/2)$.
If $p \in (1,\infty)$, $w \in A_{p}$, $F \in L^{p}(Q_0,wdx)$
and $u \in W_0^{1,2}(\Omega)$ is a solution to
\begin{align*}
\dive A(x)\nabla u(x) &= \dive F(x)  , \quad x \in \Omega
\end{align*} 
then
\[
\no{\nabla u}_{L^{p}(\Omega,wdx)} \le C [w]_{A_p}^{\max \left( \frac{1}{p-1},1 \right)} \no{F}_{L^{p}(3Q_0,wdx)}  
\]
where $C$ depends on $(n,p,\lambda,\Lambda,\Omega)$ and the Dini data of $A$.
\end{corollary}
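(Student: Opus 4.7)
The plan is to deduce the corollary from the sparse bound of Theorem \ref{intro-smooth} by feeding it into the sparse-to-weighted machinery already recorded in this paper as Proposition \ref{prop-weigths-quote}. No new PDE information is required beyond the sparse estimate.

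First I would set up a weighted duality. Fix $F\in L^p(3Q_0,w\,dx)$ and put $\sigma := w^{1-p'}$, so that $\sigma\in A_{p'}$ with $[\sigma]_{A_{p'}} = [w]_{A_p}^{1/(p-1)}$. By $L^p(w)$--$L^{p'}(\sigma)$ duality,
\[
\no{\nabla u}_{L^p(\Omega,w)} = \sup_g \int_\Omega |\nabla u(x)|\,g(x)\,dx,
\]
where the supremum runs over non-negative $g$ with $\no{g}_{L^{p'}(\sigma)} \le 1$. A routine truncation ($g \mapsto \min(g,k)\mathbf{1}_{B(0,k)}$, monotone convergence) reduces matters to bounded non-negative $g$ supported in $3Q_0$, which is the class in which Theorem \ref{intro-smooth} operates.

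Second, for each such $g$ Theorem \ref{intro-smooth} produces a $(1-\theta)$-sparse family $\mathcal{P}$ with $3P\subset 3Q_0$ and
\[
\int_\Omega |\nabla u(x)|\,g(x)\,dx \le C \sum_{P\in\mathcal{P}} |P|\,\la{|F|}_{3P,1}\la{g}_{P,1},
\]
where $C$ depends on $(\lambda,\Lambda,n,\Omega)$ and the Dini data of $A$, but not on $(F,g)$. This is precisely a $(1,1)$-type sparse estimate.

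Third, I would invoke Proposition \ref{prop-weigths-quote}, which converts such two-weight sparse bounds into quantitative weighted inequalities. Specialised to $s=r=1$ and the pair $(w,\sigma)$, it yields
\[
\sum_{P\in\mathcal{P}} |P|\,\la{|F|}_{3P,1}\la{g}_{P,1} \le C\,[w]_{A_p}^{\max\left(1,\,\frac{1}{p-1}\right)} \no{F}_{L^p(w)} \no{g}_{L^{p'}(\sigma)},
\]
with $C$ depending only on $(n,p)$; this is just the sharp $A_p$ theorem in its sparse formulation. Taking the supremum over admissible $g$ gives the corollary. The mild nuisances are (i) the factor-$3$ enlargement inside $\la{|F|}_{3P,1}$, which is handled by partitioning $\mathcal{P}$ into $O(3^n)$ subfamilies whose $3$-dilates retain sparseness (with a worse but still admissible constant), and (ii) the truncation of $g$ described above. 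Neither presents a genuine obstacle, so there is no real hard step here: the entire proof is a packaging of already available ingredients.
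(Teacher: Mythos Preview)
Your proposal is correct and follows the same route as the paper: apply Theorem \ref{intro-smooth} to obtain a $(1,1)$ sparse bound, then feed it into Proposition \ref{prop-weigths-quote} and dualize against $g\in L^{p'}(\sigma)$. The only slip is notational: in Proposition \ref{prop-weigths-quote} the parameters satisfy $1\le r\le 2<s\le\infty$, so the specialization should read $r=1$, $s=\infty$ (equivalently $s'=1$, whence $[w]_{RH_{(s/r)'}}=[w]_{RH_1}=1$ and $\beta=\max(1,\tfrac{1}{p-1})$), not ``$s=r=1$''.
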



We conclude the introduction with a few words about our proofs.
We were mostly inspired by the arguments of Lerner \cite{MR3484688} and Lerner--Ombrosi \cite{MR4058547}.
After putting the PDE and singular integral theory quantities in a correspondence and finding all the necessary estimates,
our proof is very similar to theirs.
However,
our formulations go through boundary value problems rather than Poisson problems as 
would be closer to the spirit of their writing.
Although it might be possible to reduce the Dini theorem to a situation where their result can be applied as a black box,
we do not believe there is much gain of insight or brevity in attempting to do that.
Hence we stick to the way of Dirichlet problems.
The idea of using reverse H\"older inequalities as a workhorse comes from two sources.
In PDE,
our original plan and motivation was to follow the work of Caffarelli and Peral \cite{MR1486629}.
In the sparse context,
the reverse H\"older inequalities were exploited very efficiently by Lacey in \cite{MR4041115}.
However,
we note that several arguments become invalid when the coefficient $a$ appearing in the equation becomes too rough.
This is why we present two arguments,
one for the general setting (which carries us until the VMO theorem)
and another one which takes advantage of full regularity theory for equations with Dini continuous coefficients, in particular the estimates of Dong, Escauriaza and Kim \cite{MR3620893,MR3747493}.
 
\bigskip

\noindent
\textit{Acknowledgement.}
O. Saari is supported by Generalitat de Catalunya (2021 SGR 00087), 
Ministerio de Ciencia e Innovaci\'on and the European Union -- Next Generation EU (RYC2021-032950-I),  (PID2021-123903NB-I00) and the Spanish State Research Agency 
through the Severo Ochoa and María de Maeztu Program for Centers and Units of Excellence in 
R\&D (CEX2020-001084-M).
Y. Wei is supported by the National Natural Science Foundation of China (Grant No. 11871242, 11971060),  and Scientific Research Project of Education Department of Jilin Province (Grant No. JJKH20220964KJ).
Part of the research was carried out during the first author's 
visit to Jilin University,
the group in which he wishes to thank for its kind hospitality.
We feel very grateful to anonymous reviewers for their time and valuable suggestions about our manuscript.
\section{Preliminaries}
\label{sec:preliminaries}

\subsection{Notation and generalities}
We work in $\rn$ and its open subsets.
For a measurable set $E \subset \rn$,
we denote its $n$-dimensional Lebesgue measure by $|E|$.
Given a function $h \in L_{loc}^{1}(\rn)$,
a measurable set $E$ of finite measure and a number $s \in (0,\infty)$,
we denote 
\[
\sint_{E} h(x) \, dx := \frac{1}{|E|} \int_E h(x) \, dx,
\quad \la{h}_{E,s} := \left(\sint_{E} |h(x)|^{s} \, dx  \right)^{1/s}. \]
We write $\la{h}_{E} := \la{h}_{E,1}$.

By a cube we mean a cartesian product of $n$ equally long intervals.
Given a cube $Q$,
we denote its side length by  $\ell(Q) = |Q|^{1/n}$.
The standard set of dyadic cubes is 
\[
\mathcal{D} = \{ 2^{k} ([0,1)^{n} + j ) :  k \in \Z, \ j \in \Z^{n} \}.
\]
Given a cube $Q_0 \subset \rn$,
let $\Theta$ be the translation and scaling with $\Theta (Q_0) = [0,1)^{n}$.
We denote 
\[
\mathcal{D}(Q_0) = \{ \Theta^{-1}(Q) : Q \in \mathcal{D}, \ \Theta^{-1}(Q) \subset Q_0 \}.
\]
We refer to $\mathcal{D}(Q_0)$ as dyadic subcubes of $Q_0$.

Given a reference cube $Q_0$,
we define the standard and fractional maximal functions as  
\[
M f(x) = \sup_{P \in \mathcal{D}(Q_0)} 1_{P}(x) \la{f}_{3P}
\]
and for $s \in (0,\infty)$
\[
M_{s} f(x) = \sup_{P \in \mathcal{D}(Q_0)} 1_{P}(x) (3\ell(P))^{s} \la{f}_{3P}.
\]
The standard theory includes the bounds
\begin{equation}
\label{maximal-function-theorem}
\no{M}_{L^{1}(\rn) \to L^{1,\infty}(\rn)} \le  C_n < \infty, \quad 
\no{M}_{L^{p}(\rn) \to L^{p}(\rn)} \le  C_{p,n} < \infty
\end{equation}  
for all $p \in (1,\infty]$,
see for instance Theorem 2.1.6 in \cite{MR2445437},
and the fractional maximal function bounds 
\begin{equation}
\label{frac-maximal-function-theorem} 
\no{M_s}_{L^{p}(\rn) \to L^{q}(\rn)} \le C_{s,n,p} < \infty
\end{equation} 
for 
\[
\frac{s}{n} = \frac{1}{p} - \frac{1}{q}, \quad q > \frac{n}{n-s},
\]
see for instance Chapter 6 in \cite{MR2463316}.
We denote the upper and lower Sobolev conjugates as 
\[
p_{*} = \frac{pn}{n+p}, \quad p^{*} = \frac{pn}{n-p}
\]
and we denote the H\"older conjugate as $p' = p/(p-1)$
when $p \in (1,\infty)$ and $1' = \infty$ and $\infty' = 1$.


\subsection{Solutions}
We axiomatize the properties of the coefficients of the equations 
that we study.
The definition below is standard.

\begin{definition}
\label{def:nonlinearity}
Consider an open and connected set $\Omega$.
Let $0 < \lambda \le \Lambda < \infty$ be real numbers.
We call a function $a : \Omega \times \rn \to \rn$
a $(\lambda,\Lambda)$-elliptic coefficient in $\Omega$ if the following conditions hold.
\begin{itemize}
  \item $x \mapsto a(x,\xi)$ is measurable for all $\xi$, and $\xi \mapsto a(x,\xi)$ is continuously differentiable for all $x$.
  \item for all $x \in \Omega$ and $\xi, \eta \in \rn$, we have $a(x,0) = 0$ and
\begin{equation}
\label{boundedness}
  |a(x,\xi) - a(x,\eta)| \le \Lambda |\xi-\eta|.
  \end{equation}
  This is referred to as \textit{Lipschitz condition in gradient variable}.
  \item for all $x \in \Omega$ and $\xi,\eta \in \rn$, we have
\begin{equation}
\label{strong-monotonicity}
(a(x,\xi) - a(x,\eta)) \cdot (\xi - \eta) \ge \lambda |\xi - \eta|^{2}.
\end{equation} 
This is referred to as \textit{strong monotonicity}.
\end{itemize}
\end{definition}

The next definition of a solution is standard. 

\begin{definition}
\label{def:solution}
Let $a$ be a $(\lambda,\Lambda)$-elliptic coefficient in $\Omega$.
Let $F \in L_{loc}^{1}(\Omega;\rn)$, $f \in L_{loc}^{1}(\Omega)$ and $h \in W^{1,2}(\Omega)$.
We call a function $u \in W^{1,2}(\Omega)$
a solution to 
\begin{align}
\label{equations}
\dive a(x,\nabla u(x)) &= \dive F(x) + f(x), \quad x \in \Omega \\
                    u(x) &= h(x), \quad x \in \partial \Omega \nonumber
\end{align}
if $u - h \in W_{0}^{1,2}(\Omega)$ and it holds for all test functions $\varphi \in C_c^{\infty}(\Omega)$  
\[
\int_{\Omega} a(x,\nabla u (x)) \cdot \nabla \varphi(x) \, dx = \int _{\Omega} F(x) \cdot \nabla \varphi(x) \, dx + \int _{\Omega} f(x) \varphi(x) \, dx .
\]
\end{definition}
We remark that by approximation,
the smoothness condition on test functions can always be relaxed down to one Sobolev derivative 
in the relevant $L^{p}(\Omega)$ space. 

The key property of solutions to equations with zero right hand side
is that their gradients satisfy a reverse H\"older inequality.
The strength of the reverse H\"older inequality for the gradients of solutions to homogeneous equations determines the quality of sparse, weighted and $L^{p}$ bounds that the general solutions satisfy.
The exponent on the right hand side is usually taken to be $1$ or $2$ instead of the $1/2$ here,
but it is a general property of reverse H\"older inequalities for solutions
that if they hold for one exponent on the right hand side,
then they hold for all them, see Appendix B of \cite{MR3565414}.
This is in strong contrast with the exponent on the left hand side,
which cannot be changed freely.

\begin{definition}
\label{def:rhi}
Let $\Omega$ be a domain and 
let $w \in L_{loc}^{1}(\Omega)$ be non-negative.
Let $q > 2$.
We say $w$ satisfies a $q$-weak reverse H\"older inequality in $\Omega$ 
if there exists a constant $C_{w,q}$ such that for all cubes $P \subset \rn$ 
\[
\left( \sint_{P} 1_{\Omega}(x)w(x)^{q} \, dx \right)^{1/q} \le C_{w,q} \left( \sint_{2P} 1_{\Omega}(x)w(x)^{1/2} \, dx \right)^{2} .
\] 
If this only holds for $P$ such that $2P \subset \Omega$,
we say $w$ satisfies a local $q$-weak reverse H\"older inequality in $\Omega$.
Similarly, if this only holds for $P$ such that $P \cap \Omega \ne \varnothing$,
we say $w$ satisfies a boundary $q$-weak reverse H\"older inequality in $\Omega$.
\end{definition}

For the proof of the sparse bound,
we will need a reverse H\"older inequality for gradients of differences of solutions
rather than solutions themselves.
For linear equations,
there is of course no difference,
but in the case of nonlinear equations,
this seems to be an important point.
To keep the exposition of linear and nonlinear cases unified,
we set a definition to capture this discrepancy.

To state the next definition,
we introduce a family of smoothened cubes.
We let $O_0$ be a smooth domain such that 
\[
\frac{299}{100} [-1/2,1/2]^{n} \subset O_0 \subset 3[-1/2,1/2]^{n}.
\]
Given any cube $Q \subset \rn$,
we let $O_Q$ be the smooth domain such that if $A_Q$ is the scaling and translation mapping $A_Q Q = [-1/2,1/2]^{n}$,
then $O_Q = A_Q^{-1}O_0$. 
Heuristically,
$O_Q$ is $3Q$ but with corners smoothed away.

\begin{definition}[Admissible upper exponent]
\label{def:upper_admissible}
Let $\Omega$ be a domain.
Let $a$ be a $(\lambda,\Lambda)$-elliptic coefficient in $\Omega$.
Given a cube $Q$,
define the class $\mathcal{U}(Q)$
as the family of pairs $(u,v)$ such that $u,v \in W^{1,2}_{0}(\Omega)$ 
and
\[
\dive a(x,\nabla u (x)) = \dive a(x,\nabla v (x))
\]
holds in $O_Q \cap \Omega$ in the sense of Definition \ref{def:solution}.

A number $q > 2$ is called an admissible upper exponent for $a$ if 
for any cube $Q$ and any pair $(u,v) \in \mathcal{U}(Q)$
the function 
\[
1_{2Q}|\nabla u - \nabla v |
\]
satisfies a local or boundary $q$-weak reverse H\"older inequality in $\Omega \cap 2Q$.
For an admissible upper exponent $q > 2$, we denote
\begin{align*}
N^{\Omega}_{h,bdr}(a,q) &= \sup_{Q} \sup_{(u,v) \in \mathcal{U}(Q)} \sup_{P \subset 2Q: P \cap \Omega \ne \varnothing} \frac{\la{|\nabla u - \nabla v|}_{P,q}}{\la{|\nabla u - \nabla v|}_{2P,1/2}}, \\
N^{\Omega}_{h,loc}(a,q) &= \sup_{Q} \sup_{(u,v) \in \mathcal{U}(Q)} \sup_{P \subset 2Q: 2P \subset \Omega} \frac{\la{|\nabla u - \nabla v|}_{P,q}}{\la{|\nabla u - \nabla v|}_{2P,1/2}} .
\end{align*}
where the supremum in $Q$ is over all cubes.
\end{definition}
Notice that $N^{\Omega}_{h,bdr}$ may be infinite a priori.
For concrete applications of our results,
verifying finiteness of this quantity is separated 
from the main sparse argument of Section \ref{s:general}.
When proving the theorems in the introduction,
we have three cases.
For Theorem \ref{intro-nl},
we rely on global Meyers type estimates valid in Lipschitz domains,
which we state and prove as Proposition \ref{prop-admissible-1}. 
For Theorem \ref{intro-lin},
we use the boundary regularity results of Kinnunen and Zhou \cite{MR1799417}.
Finally,
for Theorem \ref{intro-lin-cor-hoelder},
we can take advantage of the boundary regularity estimates of Escauriaza, Dong and Kim \cite{MR3747493}.

The second key property of solutions is an energy estimate.
An energy estimate as below is always valid for $q = 2$,
but in order to carry out a bootstrap argument,
we write a generic definition. 
Unlike in the case of admissible upper exponent,
the case of lower exponents is interesting only in the case of estimates holding up to the boundary.
Local versions do not suffice for running our arguments.

\begin{definition}[Admissible lower exponent]
\label{def:lower_admissible}
Let $a$ be a $(\lambda,\Lambda)$-elliptic coefficient in $\Omega$.
A number $1 < q \le 2$ is called an admissible lower exponent for $a$ if
there exists a finite number $N^{\Omega}_l(a,q)$ such that 
for all cubes $P$,  
for any source data
$F \in L_{loc}^{q}(\Omega)$ and $f \in L_{loc}^{q_{*}}(\Omega)$,
and for any solution $u \in W^{1,2}_{0}(O_P \cap \Omega)$ to \eqref{equations} in $O_P \cap \Omega$ 
it holds 
\begin{multline*}
\no{   \nabla u }_{L^{q}(O_{P} \cap \Omega)} \\
 \le N^{\Omega}_l(a,q) \left(
 \int_{O_{P}\cap \Omega} |F(x)|^{q} \, dx
+  \left( \int_{O_{P}\cap \Omega}  |f(x)|^{q_{*}} \, dx \right)^{q/q_{*}} \right)^{1/q}.  
\end{multline*}
If $q_{*} \le 1$,
we require $f \equiv 0$.
\end{definition}
 
\section{A general result}
\label{s:general}
In this section,
we prove the following general result.

\begin{theorem}[Local sparse bound]
\label{thm-nl-technical-version}
Let $\theta \in (0,1)$, $0 < \lambda \le \Lambda < \infty$ and $Q_0$ be a cube.
Let a $(\lambda,\Lambda)$-elliptic coefficient $a$ in $3Q_0$ be given.
Let $q_l \in (1,2]$ be an admissible lower exponent for $a$ 
in the sense of Definition \ref{def:lower_admissible}.
Let $q_h \in (2,\infty)$ be an admissible upper exponent for $a$ 
in the sense of Definition \ref{def:upper_admissible}.
Denote 
\[
A = N^{\Omega}_{l}(a,q_l), \quad B = N^{\Omega}_{h,loc}(a,q_h).
\]
Let $p > q_l$ and assume that $F \in L^{p}(3Q_0;\rn)$ and $f \in L^{p_{*}}(3Q_0)$.
If $f$ is not identically zero, 
assume $q_{l*} > 1$.
Let $u \in W^{1,2}_0(3Q_0)$ be a solution to
\begin{align*}
\dive a(x,\nabla u(x)) &= \dive  F(x) +  f(x), \quad x \in 3Q_0  
\end{align*}
according to Definition \ref{def:solution}.
Let $g \in L^{\infty}(3Q_0)$ be non-negative.

Then there exists a $(1-\theta)$-sparse family $\mathcal{P}$ of cubes $P$
such that for all $P \in \mathcal{P}$ it holds $3P \subset 3Q_0$ and
\begin{multline*}
\int_{Q_0} |\nabla u (x)|g(x) \, dx
\le C \sum_{P \in \mathcal{P}} |P| \la{|F|}_{3P,q_l}\la{g}_{P,q_h'} \\
 + C \sum_{P \in \mathcal{P}} |P| 3\ell(P) \la{|f|}_{3P,q_{l*}} \la{g}_{P,q_h'} 
\end{multline*}
where 
\[
C =   9^{n/q_l}  (A\Lambda+1)AB  \left(   \frac{\no{M}_{L^{1}(\rn) \to L^{1,\infty}(\rn)} }{\theta} \right)^{1/q_l}  .
\]
\end{theorem}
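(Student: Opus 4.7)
The plan is to adapt the recursive sparse construction of Lerner and Lerner--Ombrosi to the PDE setting, using the $a$-harmonic replacement as the PDE analogue of the singular-kernel truncation. The sparse family $\mathcal{P}$ is built inductively, starting from $\mathcal{P}_0 = \{Q_0\}$ and passing from the $k$-th generation to the $(k+1)$-st by selecting certain maximal dyadic subcubes. On each cube $P$ entering the construction I decompose $u = v_P + w_P$ on $O_P \cap 3Q_0$, where $v_P \in W^{1,2}(3Q_0)$ is the $a$-harmonic replacement of $u$ (so $v_P - u \in W^{1,2}_0(O_P \cap 3Q_0)$, $\dive a(\cdot, \nabla v_P) = 0$ in $O_P \cap 3Q_0$, and $v_P = u$ outside), and $w_P := u - v_P \in W^{1,2}_0(O_P \cap 3Q_0)$.

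Two PDE estimates attach to this decomposition. Since $(v_P, 0) \in \mathcal{U}(P)$, the admissible upper exponent yields the reverse H\"older inequality
\[
\la{|\nabla v_P|}_{P, q_h} \le B \,\la{|\nabla v_P|}_{2P, 1/2}.
\]
Meanwhile $w_P$ solves the $(\lambda, \Lambda)$-elliptic Dirichlet problem $\dive \tilde a(\cdot, \nabla w_P) = \dive F + f$ with zero trace, where $\tilde a(x, \xi) := a(x, \xi + \nabla v_P(x)) - a(x, \nabla v_P(x))$ is the linearization of $a$ around $\nabla v_P$; applying the admissible lower exponent to this linearization yields
\[
\no{\nabla w_P}_{L^{q_l}(O_P \cap 3Q_0)} \lesssim A \Lambda \bigl( \no{F}_{L^{q_l}(O_P)}^{q_l} + \no{f}_{L^{q_{l*}}(O_P)}^{q_l} \bigr)^{1/q_l},
\]
the factor $\Lambda$ absorbing the cost of transferring admissibility from $a$ to $\tilde a$. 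With the threshold
\[
\tau_P := \Bigl( \tfrac{9^{n} \no{M}_{L^{1}(\rn) \to L^{1, \infty}(\rn)}}{\theta} \Bigr)^{1/q_l} A\Lambda \bigl( \la{|F|}_{3P, q_l} + 3\ell(P) \la{|f|}_{3P, q_{l*}} \bigr),
\]
the children of $P$ are the maximal dyadic subcubes on which $M_P(|\nabla w_P|^{q_l})^{1/q_l} > \tau_P$. The weak-$(1,1)$ bound for the dyadic maximal function combined with the energy estimate forces their union to have measure at most $\theta|P|$, so the disjoint complements $E_P := P \setminus \bigcup \text{children}(P)$ deliver the required $(1-\theta)$-sparseness.

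On $E_P$, Lebesgue differentiation forces $|\nabla w_P| \le \tau_P$ a.e., so splitting $\nabla u = \nabla v_P + \nabla w_P$ and applying H\"older with $(q_h, q_h')$ followed by the reverse H\"older inequality to the $v_P$-piece yields the local estimate
\[
\int_{E_P} |\nabla u|\, g \le B|P| \la{|\nabla v_P|}_{2P, 1/2} \la{g}_{P, q_h'} + \tau_P |P| \la{g}_{P, q_h'}.
\]
Summation through $\int_{Q_0} |\nabla u|\, g = \sum_{P \in \mathcal{P}} \int_{E_P} |\nabla u|\, g$ then produces the sparse bound once $\la{|\nabla v_P|}_{2P, 1/2}$ is translated into averages of $F$ and $f$ on $3P$.

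This translation is the main obstacle. Rewriting $\nabla v_P = \nabla u - \nabla w_P$ on $2P \subset O_P$, the $\nabla w_P$ contribution is $\lesssim \tau_P$ by Jensen and the energy estimate, but the remaining $\la{|\nabla u|}_{2P, 1/2}$ admits no direct bound by source data at the scale of $P$ alone. The Lerner--Ombrosi device is to fold this residual into the recursion itself: using the telescoping across the generations of $\mathcal{P}$, the geometric shrinkage $\sum_{P \in \mathcal{P}_k} |P| \le \theta^{k} |Q_0|$, and the global $L^{q_l}$ energy bound for $u$ (which guarantees that the tail $\sum_{P \in \mathcal{P}_k} \int_P |\nabla u|\, g$ vanishes as $k \to \infty$ since $q_l > 1$), the apparent circularity dissolves and the argument closes with the advertised constant $9^{n/q_l}(A\Lambda + 1) A B (\no{M}_{L^{1}(\rn) \to L^{1,\infty}(\rn)}/\theta)^{1/q_l}$.
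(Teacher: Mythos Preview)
Your decomposition is the ``wrong way round'' compared to the paper's, and this creates two genuine gaps.

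\textbf{Gap 1: the lower exponent does not transfer to $\tilde a$.}  The hypothesis gives you $N^\Omega_l(a,q_l)<\infty$ for the specific coefficient $a$.  Your shifted coefficient $\tilde a(x,\xi)=a(x,\xi+\nabla v_P(x))-a(x,\nabla v_P(x))$ is certainly $(\lambda,\Lambda)$-elliptic, but nothing in Definition~\ref{def:lower_admissible} says that $q_l$ is admissible for $\tilde a$, let alone with a constant $A\Lambda$.  (For $q_l=2$ the energy estimate is universal, but the theorem allows any $q_l\in(1,2]$.)  The paper avoids this entirely: it views $u_P$ as solving $\dive a(\cdot,\nabla u_P)=\dive \tilde F$ with $\tilde F:=a(\cdot,\nabla u_Q)$, so the lower exponent is applied to the \emph{original} $a$ and the factor $\Lambda$ enters only through $|\tilde F|\le\Lambda|\nabla u_Q|$.

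\textbf{Gap 2: the residual $\la{|\nabla u|}_{2P,1/2}$ does not ``fold into the recursion''.}  Your recursion is on the fixed function $u$, partitioning $Q_0$ into the sets $E_P$.  The term $B|P|\la{|\nabla v_P|}_{2P,1/2}\la{g}_{P,q_h'}$ reduces to $B|P|\la{|\nabla u|}_{2P,1/2}\la{g}_{P,q_h'}$ plus a controllable piece, but $\la{|\nabla u|}_{2P,1/2}$ is neither bounded by source data on $3P$ nor of the form $\int_P|\nabla u|\,g$ that your recursion tracks.  There is no telescoping here: the stopping condition you impose is on $|\nabla w_P|$, so it provides no control on $|\nabla u|$ at the scale of $P$, and the geometric decay of $|\bigcup\mathcal{P}_k|$ says nothing about averages of $|\nabla u|$ over individual cubes.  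The invocation of ``the Lerner--Ombrosi device'' is not an argument.

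The paper sidesteps both problems by changing the function along the recursion.  It sets $L(Q)=\int_Q|\nabla u_Q|\,g$ where $u_Q\in W^{1,2}_0(O_Q)$ solves the inhomogeneous equation in $O_Q$, and stops on maximal dyadic $P\subset Q$ where $\la{|\nabla u_Q|^{q_l}}_{3P}$ exceeds a threshold $D$ determined by $\la{|F|}_{3Q,q_l}$ and $\la{|f|}_{3Q,q_{l*}}$.  The pair $(u_Q,u_P)\in\mathcal U(P)$ provides the reverse H\"older inequality for $\nabla u_Q-\nabla u_P$; maximality of $P$ bounds $\la{|\nabla u_Q|}_{3P,q_l}$ by $D$; and the lower exponent for $a$ bounds $\la{|\nabla u_P|}_{2P,q_l}$ by $A\Lambda\la{|\nabla u_Q|}_{3P,q_l}$.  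The leftover term is exactly $\sum_P L(P)$, which is a clean recursion.  The tail $\sum_{P\in\mathcal S_{j+1}}L(P)\to 0$ follows from the lower exponent applied to each $u_P$, the maximal function bounds, and $p>q_l$.
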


For the totality of this section,
we fix the notation as in the hypothesis of Theorem \ref{thm-nl-technical-version}. 
Clearly $a$ will be a $(\lambda,\Lambda)$-elliptic coefficient in all subcubes $Q \subset 3Q_0$.
For such a cube $Q$, 
we denote by $u_Q$ the solution $u \in W_{0}^{1,2}(O_Q)$ 
to 
\[
\dive a(x,\nabla u(x)) = \dive F(x) + f(x)
\]
in $O_Q$.
We denote 
\[
L(Q) := \int_{Q} |\nabla u_Q(x)| g(x) \, dx .  
\]
The main ingredient of the proof is an iteration formula for this quantity.

\begin{lemma}
\label{lemma-iteration-step}
Let $Q$ be a cube, $\theta \in (0,1)$, and let notations and assumptions of Theorem \ref{thm-nl-technical-version} remain valid. 
Then there exists a family $\mathcal{P}$ of pairwise disjoint cubes 
such that $3P \subset 3Q$ for all $P \in \mathcal{P}$;
it holds 
\begin{equation}
\label{iteration-claimed-inequality}
L(Q)
\le C |Q| (\la{|F|}_{3Q,q_l} + 3\ell(Q) \la{|f|}_{3Q,q_{l*}} )\la{g}_{Q,q_h'} + \sum_{P \in \mathcal{P}} L(P) 
\end{equation}
with 
\[
C = 9^{n/q_l}    ( A \Lambda + 1 ) A B \left(   \frac{\no{M}_{L^{1}(\rn) \to L^{1,\infty}(\rn)} }{\theta} \right)^{1/q_l}  
\]
and 
\[
\abs{Q \setminus \bigcup \mathcal{P}} \ge (1-\theta) |Q|.
\]
\end{lemma}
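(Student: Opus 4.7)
The plan is to adapt Lerner's stopping-time scheme (cf.\ \cite{MR3484688, MR4058547}) to the PDE setting, with the size/smoothness estimates for kernels of singular integrals replaced by the admissible lower exponent energy bound and the admissible upper exponent reverse H\"older inequality for differences of solutions in $\mathcal{U}$.

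First I construct the stopping family. Set
\[
E := \{x \in Q : M(1_{3Q}|F|^{q_l})^{1/q_l}(x) > K_1 \la{|F|}_{3Q,q_l}\} \cup \{x \in Q : M_{q_{l*}}(1_{3Q}|f|^{q_{l*}})^{1/q_{l*}}(x) > K_2 \ell(Q) \la{|f|}_{3Q,q_{l*}}\},
\]
with $K_1, K_2$ of order $(\no{M}_{L^1\to L^{1,\infty}}/\theta)^{1/q_l}$, chosen so that the weak $(1,1)$ bound for the (fractional) maximal function yields $|E| \le \theta|Q|$. Let $\mathcal{P}$ be the collection of maximal dyadic subcubes of $Q$ contained in $E$; these are pairwise disjoint with $|\cup\mathcal{P}| \le \theta|Q|$, and maximality (the dyadic parent $\hat P$ is not in $E$) gives the stopping-time control $\la{|F|}_{3P,q_l} + \ell(P)\la{|f|}_{3P,q_{l*}} \lesssim \la{|F|}_{3Q,q_l} + \ell(Q)\la{|f|}_{3Q,q_{l*}}$ for every $P \in \mathcal{P}$.

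Second I decompose $L(Q) = \int_{Q\setminus\cup\mathcal{P}}|\nabla u_Q|g\,dx + \sum_{P\in\mathcal{P}}\int_P|\nabla u_Q|g\,dx$ and on each $P\in\mathcal{P}$ insert the local solution $u_P$ so that $\int_P|\nabla u_Q|g \le L(P) + \int_P|\nabla u_Q - \nabla u_P|g$. Extending $u_Q$ and $u_P$ by zero outside their respective $O$-domains places both in $W^{1,2}_0(3Q_0)$; since they share the same source in $O_P \subset O_Q$, the pair $(u_Q, u_P)$ belongs to $\mathcal{U}(P)$. The admissible upper exponent $q_h$ then yields a reverse H\"older inequality for $\nabla u_Q - \nabla u_P$ on $2P$, which combined with H\"older's inequality gives
\[
\int_P|\nabla u_Q - \nabla u_P|g\,dx \le B|P|\la{|\nabla u_Q - \nabla u_P|}_{2P, 1/2}\la{g}_{P, q_h'}.
\]
Jensen's inequality bounds the $(1/2)$-average by a constant multiple of $\la{|\nabla u_Q|}_{2P, q_l} + \la{|\nabla u_P|}_{2P, q_l}$, and the admissible lower exponent energy estimate applied to $u_P$ on $O_P$ and to $u_Q$ on $O_Q$ (with the Lipschitz bound on $a$ producing the factor $(A\Lambda+1)$) reduces these to $\la{|F|}_{3P, q_l} + \ell(P)\la{|f|}_{3P, q_{l*}}$, which the stopping time then absorbs into the $3Q$-scale averages.

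The good part $\int_{Q\setminus\cup\mathcal{P}}|\nabla u_Q|g\,dx$ is treated by the same template at scale $Q$: H\"older on the good set produces a $q_l$-average of $|\nabla u_Q|$ and a $q_l'$-average of $g$, the former is controlled by the admissible lower energy estimate on $O_Q$, and a reverse H\"older step at scale $Q$ (through an auxiliary $\mathcal{U}$-pair) trades $\la{g}_{Q, q_l'}$ for $\la{g}_{Q, q_h'}$ and produces the $3\ell(Q)$ factor in front of the $f$-term. Assembling everything yields \eqref{iteration-claimed-inequality} with the stated constant $C$. The main obstacle is precisely this good-part step, since $\nabla u_Q$ itself does \emph{not} satisfy a $q_h$-reverse H\"older inequality — only differences of $\mathcal{U}$-pairs do — so the estimate must be organized so that the $\la{g}_{Q, q_h'}$ factor appears through a reverse H\"older bound for a suitable comparison pair rather than for $\nabla u_Q$ directly.
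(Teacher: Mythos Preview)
Your scheme stops on the \emph{source data} via $M(1_{3Q}|F|^{q_l})$ and $M_{q_{l*}}(1_{3Q}|f|^{q_{l*}})$, whereas the paper stops on the \emph{solution} via $M(|\nabla u_Q|^{q_l})$ at level $D^{q_l}$, with $D$ calibrated through the admissible lower exponent so that $|\Xi|\le\theta|Q|$. This difference is not cosmetic; it is exactly what makes the two problematic steps in your outline go through.

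\textbf{The good part.} Stopping on $|\nabla u|$ gives the pointwise bound $|\nabla u_Q|\le D$ almost everywhere on $Q\setminus\bigcup\mathcal{P}$, so
\[
\int_{Q\setminus\bigcup\mathcal{P}}|\nabla u_Q|\,g\,dx \le D\int_Q g\,dx \le D\,|Q|\,\la{g}_{Q,q_h'}
\]
with no H\"older exponent mismatch. Your route produces $\la{g}_{Q,q_l'}$, and since $q_l\le 2<q_h$ one has $q_l'>q_h'$, so $\la{g}_{Q,q_l'}\ge\la{g}_{Q,q_h'}$; this is strictly weaker than what the lemma asserts. The proposed fix, a ``reverse H\"older step at scale $Q$ through an auxiliary $\mathcal{U}$-pair'', cannot work: the reverse H\"older inequality in Definition~\ref{def:upper_admissible} improves the integrability of $|\nabla u-\nabla v|$, not of $g$. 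There is no mechanism in the hypotheses that converts $\la{g}_{Q,q_l'}$ into $\la{g}_{Q,q_h'}$. You flag this obstacle yourself in the last sentence, but it is not resolved.

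\textbf{The term $\la{|\nabla u_Q|}_{2P,q_l}$ in II.} After the reverse H\"older and the triangle inequality you need to bound $\la{|\nabla u_Q|}_{2P,q_l}$. Applying the admissible lower exponent ``to $u_Q$ on $O_Q$'' only controls $\|\nabla u_Q\|_{L^{q_l}(O_Q)}$, which yields $\la{|\nabla u_Q|}_{2P,q_l}\le (|O_Q|/|2P|)^{1/q_l}\la{|\nabla u_Q|}_{O_Q,q_l}$ with an uncontrolled factor $(|Q|/|P|)^{1/q_l}$; and you cannot apply the lower exponent estimate on $O_P$ because $u_Q\notin W^{1,2}_0(O_P)$. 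In the paper, this term is handled by the \emph{maximality} of the stopping cubes for $M(|\nabla u_Q|^{q_l})$: the dyadic parent satisfies $\la{|\nabla u_Q|^{q_l}}_{3\widehat{P}}\le D^{q_l}$, hence $\la{|\nabla u_Q|}_{3P,q_l}\lesssim D$. The companion term $\la{|\nabla u_P|}_{2P,q_l}$ is then bounded by viewing $u_P$ as the $W^{1,2}_0(O_P)$-solution with source $\tilde F=a(\cdot,\nabla u_Q)$ and invoking the lower exponent together with the Lipschitz bound \eqref{boundedness}, which produces the factor $A\Lambda$ and reduces back to $\la{|\nabla u_Q|}_{3P,q_l}\lesssim D$.

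In short, both gaps disappear once the Calder\'on--Zygmund stopping is performed on $|\nabla u_Q|^{q_l}$ rather than on the data; the energy estimate (Definition~\ref{def:lower_admissible}) is used once, globally on $O_Q$, only to guarantee $|\Xi|\le\theta|Q|$.
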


\begin{proof} 
For brevity,
we write $u = 1_{O_Q} u_Q$. 
Then $u \in W^{1,2}(\rn)$ as $u_Q \in W_{0}^{1,2}(O_Q)$.
Consider the set  
\[
\Xi = \{ x \in Q : M ( |\nabla u|^{q_l})(x) > D^{q_l} \}.
\] 
Then 
\[
 \Xi \cup \mathcal{N} \supset \{ x \in Q : |\nabla u(x)|^{q_l} > D^{q_l} \}  
\]
for some $\mathcal{N} \subset \rn$ with $|\mathcal{N}| = 0$.
Denote
\[
D  = A  \left(   \frac{  \no{M}_{L^{1}(\rn) \to L^{1,\infty}(\rn)}  }{ 3^{-n}  \theta} \right)^{1/q_l}  (\la{|F|}_{3Q,q_l} + 3\ell(Q) \la{|f|}_{3Q,q_{l*}} ) . 
\]  
We let $\mathcal{P}$ be the family of maximal cubes $P \in \mathcal{D}(Q)$ such that 
\[
\la{ |\nabla u|^{q_l}}_{3P} > D^{q_l}.
\]
Then $\bigcup \mathcal{P} = \Xi$ and for all $P \in \mathcal{P}$,
it holds $\la{ |\nabla u|^{q_l}}_{3P} \le 2^{n} D^{q_l}$. 
By \eqref{maximal-function-theorem} and the definition of the admissible lower exponent
(Definition \ref{def:lower_admissible})
\begin{multline*}
|\Xi| 
\le 
\frac{\no{M}_{L^{1}(\rn) \to L^{1,\infty}(\rn)}}{D^{q_l} } \int_{3Q} |\nabla u(x)|^{q_l} \, dx \\
    \le \frac{ A^{q_l} \no{M}_{L^{1}(\rn) \to L^{1,\infty}(\rn)}}{D^{q_l}} \left( \int_{3Q} |F(x)|^{q_l} \, dx +  \left( \int_{3Q}  |f(x)|^{q_{l*}} \, dx \right)^{q_l/q_{l*}}  \right) \\
\le \theta |Q|.
\end{multline*} 
In addition,
the family $\mathcal{P}$ is pairwise disjoint by maximality.
Hence it remains to prove the claimed inequality \eqref{iteration-claimed-inequality} for $L(Q)$.

We write
\begin{multline*}
\int_{Q} |\nabla u(x)| g(x) \, dx 
\le \int_{Q \setminus \bigcup \mathcal{P}} |\nabla u(x)| g(x)\, dx \\
+ \sum_{P \in \mathcal{P}}  \int_{P} |\nabla u(x) - \nabla u_P(x)| g(x)\, dx 
+ \sum_{P \in \mathcal{P}}   \int_{P} | \nabla u_P(x)| g(x)\, dx \\
= \I + \II + \III.
\end{multline*}  

Now 
\[
\I \le D  \int_{Q \setminus \bigcup \mathcal{P}}  g(x) \, dx  \le D |Q|  \la{g}_{Q,q'}
\]
by the fact $Q \setminus \bigcup \mathcal{P} = Q\setminus \Xi$ and by
H\"older's inequality.

To estimate $\II$,
we first notice that 
by H\"older's inequality 
\[
\int_{P} |\nabla u_P(x) - \nabla u(x)| g(x) \, dx \le \no{\nabla u_P - \nabla u}_{L^{q_h}(P)} \no{g}_{L^{q_h'}(P)}.
\]  
By construction,
the equation
\begin{equation}
\label{eq-abst-proof-defs}
\dive a(x, \nabla u_P(x)) = \dive \tilde{F}(x)
\end{equation}
with $\tilde{F}(x) = a(x, \nabla u(x))$ holds in $O_P$.
Hence $u$ and $u_{P}$ are a pair in $\mathcal{U}(P)$ as in Definition \ref{def:upper_admissible}. 
By the fact that $q_h$ is an admissible upper exponent in the sense of Definition \ref{def:upper_admissible},
we invoke that definition to estimate
\begin{multline*}
\no{\nabla u_P - \nabla u}_{L^{q_h}(P)}  \le B |P|^{1/q_h} \la{|\nabla u_P - \nabla u|}_{2P,q_l} \\
\le B |P|^{1/q_h}( \la{|\nabla u|}_{2P,q_l} +  \la{|\nabla u_P|}_{2P,q_l}) .
\end{multline*} 
Applying Definition \ref{def:lower_admissible} of admissible lower exponent to the equation \eqref{eq-abst-proof-defs} 
we bound further
\[
\la{|\nabla u_P|}_{2P,q_l} \le (3/2)^{n/q_l}   A \Lambda \la{|\nabla u|}_{3P,q_l}.
\]
Hence by maximality of the cubes $P$
\[
\no{\nabla u_P - \nabla u}_{L^{q_l}(2P)}  \le    3^{n/q_l}  (A \Lambda  + 1)B   D
\]
and by  H\"older's inequality
\[
\II \le  3^{n/q_l}    (A\Lambda + 1) B    D \sum_{P \in \mathcal{P}} |P|^{1/q_h} \no{g}_{L^{q_h'}(P)} \\
\le  3^{n/q_l}   (A\Lambda + 1) B D     |Q| \la{g}_{Q,q_h'}.
\]

Finally,
by definition  
\[
\III = \sum_j L(P_j).
\]
This concludes the proof.   
\end{proof}

\begin{proof}[Proof of Theorem \ref{thm-nl-technical-version}] 
We construct the sparse family by a recursion.
Let $\mathcal{S}_0 = \{Q_0\}$.
For $j \ge 0$ given,
we apply Lemma \ref{lemma-iteration-step} to each $Q \in \mathcal{S}_j$
to recover a pairwise disjoint family $\mathcal{P}(Q)$ as in Lemma \ref{lemma-iteration-step}.
Let $\mathcal{S}_{j+1} = \bigcup_{Q \in \mathcal{S}_{j}} \mathcal{P}(Q)$.
By induction,
we have constructed a $\mathcal{S}_{j}$ for each $j \ge 0$.
Denote $\mathcal{P} = \bigcup_{j=0}^{\infty} \mathcal{S}_j$.
By Lemma \ref{lemma-iteration-step},
for all $j \ge 0$ 
\begin{multline*}
\int_{Q_0} |\nabla u (x)|g(x) \, dx 
= L(Q_0) \\
\le C \sum_{k=0}^{j} \sum_{P \in \mathcal{S}_k} |P| (\la{|F|}_{3P,q_{l}} + 3\ell(Q) \la{|f|}_{3Q,q_{l*}} )\la{g}_{P,q_h'} 
+ \sum_{P \in \mathcal{S}_{j+1}} L(P).
\end{multline*}
To complete the proof,
we show that the family $\mathcal{P}$ is sparse and 
that the right-most sum tends to zero as $j \to \infty$.

For $P \in \mathcal{S}_{j}$, 
we set 
\[
E_P := P \setminus \bigg( \bigcup_{Q \in \mathcal{S}_{j+1}} Q \bigg) 
= P \setminus \bigg( \bigcup_{k=j+1}^{\infty} \bigcup_{Q \in \mathcal{S}_{k}} Q \bigg) 
\]
where the second equality follows by the construction:
a cube $Q \in \mathcal{S}_{k}$ with $k \ge j+1$ is a subcube of a cube $Q'$ in $\mathcal{S}_{j+1}$. 
It is then clear that 
\[
\sum_{P \in \mathcal{P}} 1_{E_P} \le 1.
\]
Further,
by Lemma \ref{lemma-iteration-step},
for $P \in \mathcal{S}_{j}$
\[
|E_P| = \absgg{P \setminus \bigg( \bigcup_{Q \in \mathcal{S}_{j+1}} Q \bigg) } \ge (1-\theta) |P|.
\]
Hence $\mathcal{S}$ is $(1-\theta)$-sparse.

By the definition of the sets $E_P$,
for each $j \ge 0$  
\begin{multline*}
\absgg{\bigcup_{P \in  \mathcal{S}_{j+1}} P }
= \absgg{\bigcup_{P \in  \mathcal{S}_{j}} \bigcup_{\substack{P' \subset P \\ P' \in \mathcal{S}_{j+1}}} P'   } 
= \absgg{\bigcup_{P \in  \mathcal{S}_{j}} P \setminus E_P  } 
\le \sum_{P \in \mathcal{S}_{j}} |P \setminus E_P| \\
=  \sum_{P \in \mathcal{S}_{j}} (|P | - | E_P|) 
\le \theta \sum_{P \in \mathcal{S}_{j}} |P|
= \theta \absgg{\bigcup_{P \in  \mathcal{S}_{j}} P }
\end{multline*}
where the last equality used that the cubes $P$ are pairwise disjoint 
by Lemma \ref{lemma-iteration-step}.
Iterating the estimate in the display above,
we conclude 
\[
\absgg{\bigcup_{P \in  \mathcal{S}_{j}} P } \le \theta^{j}|Q_0|.
\]
Finally
by H\"older's inequality and the definition of the lower admissible exponent (Definition \ref{def:lower_admissible})
\begin{multline*}
L(P) \le   \no{\nabla u_P}_{L^{q_l}(P)} \no{g}_{L^{q_l'}(P)} \\
  \le C \no{g}_{\infty} |P| 
\left( \la{|F|}_{3P,q_l}^{q_l} + 9\ell(P)^{q_l} \la{f}_{3P,q_{l*}}^{q_l}\right)^{1/q_l}  \\
  \le C \int_{P} (M( 1_{3Q_0} |F|^{q_l} )^{1/q_l} + M_1(1_{3Q_0}|f|^{q_{l*}})^{1/q_{l*}}).
\end{multline*} 
Summing and applying H\"older's inequality in the integral, we bound  
\begin{multline*}
\sum_{P \in \mathcal{S}_{j+1}} L(P) \\
\le  C \abs{\bigcup \mathcal{S}_{j+1}}^{1/p'} \nos{M( 1_{3Q_0} |F|^{q_l} )(x)^{1/q_l} + M_{q_{l*}}(1_{3Q_0}|f|^{q_{l*}}) (x)^{1/q_{l*}} }_{L^{p}(Q_0)} \\
\le K \theta^{j}
\end{multline*}
where the constant $K$ is finite by 
the maximal function theorem,
the fractional maximal function theorem 
and by the assumption on the source data.
Sending $j \to \infty$,
we conclude the proof.
\end{proof}

Next we discuss a global version of the sparse bound.
Under the current set of definitions,
the proof is essentially identical as the 
difference between local and boundary estimates is captured 
by the question whether or not the constant $N_{h,bdr}^{\Omega}(a,q)$ (as opposed to $N_{h,loc}^{\Omega}(a,q)$) from Definition \ref{def:upper_admissible} is finite.
To address the presence of the boundary of the domain,
we hence only have to modify the notation.
Given a cube $Q \subset 3Q_0$ and the reference domain $\Omega \subset Q_0$,
we let $U_Q$ be the smooth domain such that if $A_Q$ is a scaling and translation with $A_Q Q = [-1/2,1/2]^{n}$,
then $ U_Q = \Omega \cap A_Q^{-1}(O_0)$.
We denote by $u_Q^{\Omega}$ the solution $u \in W_{0}^{1,2}(U_Q)$ 
with source data $F$ and $f$.
We denote 
\[
L^{\Omega}(Q) := \int_{Q} |\nabla u_Q^{\Omega}(x)| g(x) \, dx .  
\]
Replacing $u_P$ by $u_{P}^{\Omega}$ in the proof of Theorem \ref{thm-nl-technical-version},
we see that the same argument works mutatis mutandis with the constant $B$ replaced by
$N^{\Omega}_{h,bdr}(a,q_h)$.
To apply Theorem \ref{thm-global-sparse-gen} to real problems,
one still has to prove that $N^{\Omega}_{h,bdr}(a,q_h)$ appearing on the right hand side of the sparse bound is finite,
and this is where more geometric considerations involving density of the complement of the domain 
or flattening the boundary appear.
That is done in Section \ref{s:applications} and Section \ref{s:dini-coefficients}.
More precisely,
the geometric input comes through the application of the main theorem in \cite{MR1799417} and Proposition \ref{prop-admissible-1}.
With this discussion,
we state the final abstract theorem.


\begin{theorem}[Global sparse bound]
\label{thm-global-sparse-gen}
Under the hypotheses of Theorem \ref{thm-nl-technical-version},
when $\Omega \subset Q_0$ is a domain and the additional assumption 
\[
N^{\Omega}_{h,bdr}(a,q_h) < \infty
\]
holds,
we have that
\begin{multline*}
\int_{\Omega} |\nabla u (x)|g(x) \, dx
\le C \sum_{P \in \mathcal{P}} |P| \la{|F|}_{3P,q_l}\la{g}_{P,q_h'} \\
 + C \sum_{P \in \mathcal{P}} |P| 3\ell(P) \la{|f|}_{3P,q_{l*}} \la{g}_{P,q_h'} 
\end{multline*}
where 
\begin{multline*}
C =   9^{n/q_l}  (N^{\Omega}_{l}(a,q_l)\Lambda+1)  N^{\Omega}_{l}(a,q_l)  N^{\Omega}_{h,bdr}(a,q_h) \\
\times  \left(   \frac{\no{M}_{L^{1}(\rn) \to L^{1,\infty}(\rn)} }{\theta} \right)^{1/q_l} .
\end{multline*}
\end{theorem}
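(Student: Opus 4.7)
The plan is to repeat the proof of Theorem \ref{thm-nl-technical-version} with the interior reference solutions $u_Q$ replaced by their boundary counterparts $u_Q^{\Omega}$, so that the argument genuinely takes place in $\Omega$. I would define
\[
L^{\Omega}(Q) := \int_{Q \cap \Omega} |\nabla u_Q^{\Omega}(x)| g(x) \, dx
\]
and prove an analogue of Lemma \ref{lemma-iteration-step} with $L^{\Omega}$ in place of $L$.

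For a fixed cube $Q \subset 3Q_0$, I would set
\[
\Xi = \{ x \in Q :  M(1_{U_Q}|\nabla u_Q^{\Omega}|^{q_l})(x) > D^{q_l} \}
\]
with the same threshold $D$ as in Lemma \ref{lemma-iteration-step}. The bound $|\Xi| \le \theta|Q|$ follows from the weak-type $(1,1)$ estimate for $M$ combined with Definition \ref{def:lower_admissible} applied to $u_Q^{\Omega}$ on $U_Q$, which is where the finiteness of $N_l^{\Omega}(a,q_l)$ is used. Extracting the maximal dyadic subcubes $\mathcal{P}$ of $Q$ on which $\la{|\nabla u_Q^{\Omega}|^{q_l}}_{3P} > D^{q_l}$ and splitting
\[
L^{\Omega}(Q) = \I + \II + \III
\]
as before, I get that $\I$ is controlled pointwise by the threshold $D$ times $|Q|\la{g}_{Q,q_h'}$ and that $\III = \sum_{P \in \mathcal{P}} L^{\Omega}(P)$.

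The only step where the interior-versus-boundary distinction matters is the middle term
\[
\II = \sum_{P \in \mathcal{P}} \int_P |\nabla u_Q^{\Omega} - \nabla u_P^{\Omega}| g \, dx.
\]
On $U_P \subset O_P \cap \Omega$, the function $u_P^{\Omega}$ solves the same inhomogeneous equation as $u_Q^{\Omega}$ with matching boundary data on $\partial U_P \cap \partial \Omega$, so after a standard reduction (zero-extension) the difference $\nabla u_Q^{\Omega} - \nabla u_P^{\Omega}$ arises as a gradient of a pair falling into the scope of Definition \ref{def:upper_admissible}. Since the stopping-time cubes $P$ can meet $\partial \Omega$, the local constant $N_{h,loc}^{\Omega}(a, q_h)$ is insufficient; the hypothesis $N_{h,bdr}^{\Omega}(a, q_h) < \infty$ is exactly what upgrades the reverse H\"older inequality to one valid on subcubes of $2Q$ regardless of whether they touch $\partial \Omega$. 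Combined with Definition \ref{def:lower_admissible} applied to $u_P^{\Omega}$ on $U_P$ and the maximality of the cubes $P$, this bounds $\II$ by a multiple of $D|Q|\la{g}_{Q,q_h'}$ with $N_{h,bdr}^{\Omega}(a, q_h)$ playing the role that $B = N_{h,loc}^{\Omega}(a, q_h)$ did in the local case.

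Once the iteration lemma is established in this boundary form, the recursive construction of $\mathcal{P}$, the sparseness verification via $E_P := P \setminus \bigcup_{Q \in \mathcal{S}_{j+1}} Q$, the geometric decay $|\bigcup \mathcal{S}_j| \le \theta^j|Q_0|$, and the convergence $\sum_{P \in \mathcal{S}_{j+1}} L^{\Omega}(P) \to 0$ via the maximal and fractional maximal function theorems all proceed verbatim. The main obstacle is therefore not in the sparse machinery itself but in carefully matching the pairs $(u_Q^{\Omega}, u_P^{\Omega})$ to Definition \ref{def:upper_admissible}, so that the boundary constant $N_{h,bdr}^{\Omega}(a, q_h)$ can be invoked uniformly across all dyadic scales involved in the stopping-time construction.
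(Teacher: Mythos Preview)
Your proposal is correct and follows essentially the same approach as the paper: the paper's own argument is simply to replace $u_P$ by $u_P^{\Omega}$ throughout the proof of Theorem \ref{thm-nl-technical-version} and observe that the argument works mutatis mutandis with $B=N^{\Omega}_{h,loc}(a,q_h)$ replaced by $N^{\Omega}_{h,bdr}(a,q_h)$. Your write-up is in fact more detailed than the paper's, correctly isolating term $\II$ as the only place where the boundary reverse H\"older constant is genuinely needed.
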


\section{Applications}
\label{s:applications}

\subsection{Nonlinear equations}
In this section,
we apply the abstract Theorem \ref{thm-global-sparse-gen} to two concrete partial differential equations.
We start with the results for nonlinear equations. 
Our remaining task is to identify the admissible upper and lower exponents
according to Definition \ref{def:upper_admissible} and Definition \ref{def:lower_admissible}.

\begin{proposition}
\label{prop-energy-comparison}
Let $Q$ be a cube and $\Omega$ a domain.
Let $0 < \lambda \le \Lambda < \infty$, 
a $(\lambda,\Lambda)$-elliptic coefficient $a$ be given, 
and let $u \in W^{1,2}_0(O_Q \cap \Omega)$, $F \in L^{2}(O_Q ;\rn)$ and $f \in L^{2_{*}}(O_Q )$
be as in Definition \ref{def:solution}.
Then 
\[
\int_{O_Q \cap \Omega} |\nabla u (x)|^{2} \, dx  \le 
\frac{2}{\lambda^{2}} \int_{O_Q} |F(x)|^{2} \, dx
+ \frac{2c_n^{2}}{\lambda^{2}} \left( \int_{O_Q}  |f(x)|^{2_{*}} \, dx \right)^{2/2_{*}} .
\]
In other words, $2$ is an admissible lower exponent for $a$ in $\Omega$ with $N^{\Omega}_l(a,2) \le (1+c_n) \sqrt{2} /\lambda$ where $c_n$ is the constant from $W^{1,2}(\rn) \hookrightarrow L^{2^{*}}(\rn)$ Sobolev inequality.
\end{proposition}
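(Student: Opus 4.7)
This is the classical energy identity and nothing clever is needed; the three ingredients in Definition \ref{def:nonlinearity} and the Sobolev embedding are sufficient. The plan is to test the equation against the solution itself and then absorb.

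First, I would test the weak formulation from Definition \ref{def:solution} against $\varphi = u$. Since $u \in W^{1,2}_0(O_Q \cap \Omega)$, the remark following Definition \ref{def:solution} permits this choice after a routine density approximation by $C_c^\infty(O_Q \cap \Omega)$ functions. This yields
\[
\int_{O_Q \cap \Omega} a(x, \nabla u(x)) \cdot \nabla u(x) \, dx = \int_{O_Q \cap \Omega} F(x) \cdot \nabla u(x) \, dx + \int_{O_Q \cap \Omega} f(x) u(x) \, dx.
\]
Next, by strong monotonicity \eqref{strong-monotonicity} applied with $\xi = \nabla u(x)$ and $\eta = 0$, and using $a(x,0) = 0$, the left-hand side is pointwise bounded below by $\lambda |\nabla u(x)|^{2}$.

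For the right-hand side, I would bound the $F$-term by Cauchy--Schwarz and then Young's inequality with parameter $\varepsilon_1$, getting
\[
\int F \cdot \nabla u \, dx \le \frac{1}{4\varepsilon_1} \int_{O_Q} |F|^2 \, dx + \varepsilon_1 \int_{O_Q \cap \Omega} |\nabla u|^2 \, dx .
\]
For the $f$-term, I would first extend $u$ by zero outside $O_Q \cap \Omega$ (legitimate since $u \in W^{1,2}_0$) and apply Hölder in the conjugate pair $(2_*, 2^*)$ followed by the Sobolev inequality $\|u\|_{L^{2^{*}}(\rn)} \le c_n \|\nabla u\|_{L^{2}(\rn)}$, and finally another Young inequality with parameter $\varepsilon_2$:
\[
\int f u \, dx \le c_n \Bigl(\int_{O_Q} |f|^{2_{*}} \, dx\Bigr)^{1/2_{*}} \Bigl(\int |\nabla u|^2\Bigr)^{1/2} \le \frac{c_n^2}{4\varepsilon_2} \Bigl(\int |f|^{2_*}\Bigr)^{2/2_*} + \varepsilon_2 \int |\nabla u|^2 .
\]

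Finally, I would choose $\varepsilon_1 = \varepsilon_2 = \lambda/4$ so that the two $|\nabla u|^2$ terms on the right absorb into half of the $\lambda |\nabla u|^2$ on the left, leaving $\lambda/2$ on the left side. Dividing by $\lambda/2$ produces the stated constants $2/\lambda^2$ and $2c_n^2/\lambda^2$. The assertion $N_l^{\Omega}(a,2) \le (1+c_n)\sqrt{2}/\lambda$ is then just a crude rewriting using $\sqrt{a+b} \le \sqrt{a}+\sqrt{b}$. The only place where one has to be a little attentive is the choice of Young parameters to hit the precise constants, but there is no genuine obstacle in the argument.
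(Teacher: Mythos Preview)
Your proposal is correct and follows essentially the same approach as the paper: test the equation with $u$ itself, use strong monotonicity with $a(x,0)=0$ to get $\lambda\int|\nabla u|^2$ on the left, bound the $F$-term with Young's inequality and the $f$-term with H\"older plus Sobolev plus Young, then absorb. The only cosmetic difference is the parametrization of Young's inequality (the paper writes $ab\le \tfrac{\varepsilon}{2}a^2+\tfrac{1}{2\varepsilon}b^2$ with a single $\varepsilon=\lambda/2$, you write $ab\le \varepsilon a^2+\tfrac{1}{4\varepsilon}b^2$ with $\varepsilon_1=\varepsilon_2=\lambda/4$), but the resulting constants are identical.
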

\begin{proof}
We identify all the three functions $u$, $f$ and $F$ with
their extensions by zero to globally defined functions,
that is,
we set $u(x)=f(x)=F(x) = 0$ for all $x \notin O_Q \cap \Omega$.
Because $u \in W^{1,2}_{0}(O_Q \cap \Omega)$,
we have that the corresponding extension is in $W^{1,2}(\rn)$.
By strong monotonicity and the definition of solution  
\begin{multline*}
\lambda \int_{3Q} |\nabla u (x)|^{2} \, dx 
\le \int_{3Q} a(x,\nabla u(x)) \cdot \nabla u (x) \, dx \\
= \int_{3Q} F(x) \cdot \nabla u (x) \, dx +  \int_{3Q} f(x) u(x) \, dx .
\end{multline*}
By H\"older's inequality and 
by Sobolev inequality,
we have for the zeroth order term on the right hand side 
\begin{multline*}
\int_{3Q} f(x) u(x) \, dx \le \no{u}_{L^{2^{*}}(3Q)}\no{f}_{L^{2_*}(3Q)} 
  \le c_n \no{\nabla u}_{L^{2}(3Q)}\no{f}_{L^{2_*}(3Q)} \\
  \le \frac{\varepsilon}{2} \int_{3Q} |\nabla u (x)|^{2} \, dx 
+ \frac{c_n^{2} \no{f}_{L^{2_*}(3Q)}^{2}}{2 \varepsilon},   
\end{multline*}
where the last step followed for any $\varepsilon > 0$ from Young's inequality. 
The constant $c_n$ is the constant of Sobolev inequality.
By Young's inequality again,
we have for the first order term on the right hand side   
\[
\int_{3Q} F(x) \cdot \nabla u (x) \, dx 
\le \frac{\varepsilon}{2} \int_{3Q} |\nabla u(x)|^{2} \, dx + \frac{1}{2 \varepsilon} \int_{3Q} |F(x)|^{2} \, dx .
\] 
Choosing $\varepsilon = \lambda  /2$,
we conclude the proof.
\end{proof}

A classical result of Meyers \cite{Meyers1963}
shows that gradients of solutions to homogeneous problems satisfy a reverse H\"older inequality.
For completeness, we provide below a short argument for the inequality 
exactly in the form in which we need it. 
Note that this estimate holds up to the boundary.
We assume a Lipschitz condition on $\Omega$ to have control over $|3Q \cap \Omega^{c}|/|3Q|$
for cubes $Q$ with $2Q \cap \Omega^{c} \ne \varnothing$.
This gives further a control over the constant appearing in Sobolev inequality 
for functions $u$ on $3Q$ vanishing in $\Omega^{c}$.

\begin{proposition}
\label{prop-gehring}
Let $\Omega$ be a Lipschitz domain and let $Q_0$ be a cube.
Let $0 < \lambda \le \Lambda < \infty$, 
let $a$ be a $(\lambda,\Lambda)$-elliptic coefficient, 
and let $u \in W^{1,2}_0(\Omega)$.
Assume that $u \in W^{1,2}_0(\Omega)$ is a solution to 
\[
-\dive a(x,\nabla u(x)) = 0
\]
in $O_{Q_0} \cap \Omega$
with some boundary data (not necessarily zero at $\partial O_{Q_0} \setminus \partial \Omega$). 
Then there exists $q = q(n,\Lambda/\lambda,\Omega) > 2$ and $C = C(q,n,\Lambda/\lambda,\Omega)$ such that 
\[
\left( \sint_{Q_0}  1_{\Omega}(x)|\nabla u (x)|^{q} \, dx \right)^{1/q} \le  C\left( \sint_{2Q_0}  1_{\Omega}(x)|\nabla u (x)|^{1/2} \, dx \right)^{2}  .
\]
\end{proposition}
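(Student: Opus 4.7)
The plan is to establish a weak reverse H\"older inequality
\[
\left( \sint_{P} 1_\Omega |\nabla u|^2 \, dx \right)^{1/2} \le C \left( \sint_{2P} 1_\Omega |\nabla u|^{2_*} \, dx \right)^{1/2_*}
\]
valid for every subcube $P$ with $2P \subset 2Q_0$, where $2_* = 2n/(n+2) < 2$, and then to invoke Gehring's self-improving lemma to upgrade this to an $L^q$ bound for some $q > 2$ depending on $n$, $\Lambda/\lambda$ and $\Omega$. Finally, the fact recalled in the excerpt following Definition \ref{def:rhi} (Appendix B of \cite{MR3565414}) allows the exponent $2$ on the right-hand side to be lowered to $1/2$, giving exactly the stated inequality.

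The weak reverse H\"older will come from a Caccioppoli estimate combined with Sobolev--Poincar\'e. Fix a cube $P$ with $2P \subset 2Q_0 \subset O_{Q_0}$ and choose a smooth cutoff $\eta$ with $\eta \equiv 1$ on $P$, $\supp \eta \subset 2P$ and $|\nabla \eta| \le C/\ell(P)$. I split into two cases. In the interior case $2P \subset \Omega$, set $c = \sint_{2P} u \, dx$ and test the equation with $\varphi = \eta^2 (u - c) \in W^{1,2}_0(O_{Q_0} \cap \Omega)$. Strong monotonicity \eqref{strong-monotonicity} together with $a(x,0) = 0$ gives $a(x,\nabla u) \cdot \nabla u \ge \lambda |\nabla u|^2$, while \eqref{boundedness} gives $|a(x,\nabla u)| \le \Lambda |\nabla u|$; expanding $\nabla \varphi$ and absorbing one factor of $\eta |\nabla u|$ by Young's inequality yields
\[
\int_P |\nabla u|^2 \, dx \le \frac{C}{\ell(P)^2} \int_{2P} |u - c|^2 \, dx.
\]
In the boundary case $2P \cap \Omega^c \ne \varnothing$, I extend $u$ by zero outside $\Omega$ (legal since $u \in W^{1,2}_0(\Omega)$) and test with $\varphi = \eta^2 u$; the same computation produces the same bound with $c = 0$.

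Next I apply Sobolev--Poincar\'e to dominate $\|u - c\|_{L^2(2P)}$ by $\ell(P) \, \|\nabla u\|_{L^{2_*}(2P)}$. In the interior case this is the classical zero-mean inequality. In the boundary case the zero extension of $u$ vanishes on $2P \cap \Omega^c$, so I use the Sobolev--Poincar\'e inequality for functions vanishing on a set of positive density. This is where the Lipschitz hypothesis on $\partial \Omega$ enters decisively: it supplies a uniform lower bound $|2P \cap \Omega^c| \ge \delta_\Omega |2P|$ for every $2P$ meeting $\Omega^c$, making the Sobolev--Poincar\'e constant independent of the location and scale of $P$. Combining with Caccioppoli gives the weak reverse H\"older inequality advertised above on every $P$ with $2P \subset 2Q_0$, and Gehring's lemma in the form of Giaquinta--Modica then delivers the $L^q$ reverse H\"older on $Q_0$ for some $q > 2$.

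The main obstacle I anticipate is precisely the uniformity of the Sobolev--Poincar\'e constant in the boundary case: one has to turn the Lipschitz geometry of $\partial \Omega$ into a scale- and position-independent density estimate $|2P \cap \Omega^c|/|2P| \gtrsim 1$ on every boundary cube, and then into a zero-density Poincar\'e inequality with constant depending only on $(n, \Omega)$. Once this ingredient is in hand, the remaining steps are routine energy manipulations and a black-box application of Gehring's lemma together with the cited exponent self-improvement.
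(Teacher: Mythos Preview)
Your proposal is correct and follows essentially the same route as the paper: derive a Caccioppoli inequality by testing with a cutoff times $u$ (or $u-c$ in the interior case), combine with Sobolev--Poincar\'e (splitting into interior cubes and boundary cubes, the latter handled via the Lipschitz density estimate for $\Omega^c$), and then apply Gehring's lemma followed by the exponent self-improvement from Appendix~B of \cite{MR3565414}. The paper's proof differs only in cosmetic details of the cube geometry (it works with $3Q\subset 3Q_0$ rather than $2P\subset 2Q_0$) and in how the interior subtraction of the mean is organized.
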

\begin{proof}
We identify $u$ with its extension by zero to a global Sobolev function.
Given any cube $Q$ such that $3Q \subset 3Q_0$,
let $\varphi \in C^{\infty}(\rn)$ be such that $\no{\nabla \varphi}_{\infty} \le 2\ell(Q)$
and $1_{Q} \le \varphi \le 1_{2Q}$.
Then $u \varphi^{2} \in W^{1,2}_{0}(\Omega)$ is a valid test function.
We obtain by strong monotonicity and product rule 
\begin{multline*}
\lambda \int |\nabla u(x)|^{2} \varphi(x)^{2} \, dx
\le \int a(x,\nabla u(x)) \cdot \nabla u(x) \varphi(x)^{2} \, dx \\
= \int a(x,\nabla u(x)) \cdot \nabla[ u(x) \varphi(x)^{2}] \, dx \\
- 2 \int u(x) \varphi(x) a(x,\nabla u(x)) \cdot \nabla  \varphi(x)  \, dx
\end{multline*} 
where the first term is zero by the equation and 
the second term is bounded by 
\[
\frac{1}{\varepsilon} \int u(x)^{2} |\nabla \varphi(x)|^{2} \, dx 
+ \varepsilon \Lambda^{2} \int \varphi(x)^{2} |\nabla u(x)|^{2} \, dx .
\]
Setting $\varepsilon = \lambda/(2\Lambda^{2})$,
we conclude 
\[
\int |\nabla u(x)|^{2} \varphi(x)^{2} \, dx \le \frac{4\Lambda^{2}}{\lambda^{2}} \int u(x)^{2} |\nabla \varphi(x)|^{2} \, dx =:T.
\]
If $2Q \subset \Omega$,
we may replace $u$ by $u-\la{u}_{2Q}$
as $u-\la{u}_{2Q}$ is a solution and $(u-\la{u}_{2Q})\varphi$ is a test function.
We may then apply Sobolev--Poincar\'e inequality on $T$.
If $2Q \cap \Omega^{c} \ne \varnothing$,
we may apply Sobolev inequality for Sobolev functions vanishing in an ample portion of $3Q$.
Altogether,
we obtain the bound 
\[
\left( \sint_{Q} 1_{\Omega}(x)|\nabla u(x)|^{2 } \, dx \right)^{1/2} 
\le \frac{2c_{n,\Omega} \Lambda}{\lambda}\left( \sint_{3Q} 1_{\Omega}(x)|\nabla u(x)|^{2_{*} } \, dx \right)^{1/2_{*}}  
\]
where $c_{n,\Omega}$ is larger of the constants of Sobolev--Poincar\'e inequalities with vanishing at the boundary or with mean value zero.
This estimate holds for all cubes $Q$ with $3Q \subset 3Q_0$.
Consequently,
we may apply Gehring's lemma (Theorem 3.22 in \cite{MR2867756}) together with the Appendix B of \cite{MR3565414} to conclude 
that there exists $q = q(\Lambda/\lambda,n,\Omega)$ and $C = C(q)$ such that for all cubes $Q$ with $3Q \subset 3Q_0$
\[
\left( \sint_{Q} 1_{\Omega}(x)|\nabla u(x)|^{q} \, dx \right)^{1/q} 
\le C \left( \sint_{2Q} 1_{\Omega}(x)|\nabla u(x)|^{1/2 } \, dx \right)^{2}.  
\]
This is the claimed inequality.
\end{proof}

We first remark that the number $q$ in the above proposition is an admissible upper exponent in the sense of Definition \ref{def:upper_admissible} for linear coefficient functions $a$ in $\Omega$ 
with $N^{\Omega}_{h,bdr}(a,q) \le C(n,q,\Lambda/\lambda,\Omega)$.
Indeed, the difference of two solutions to the same linear inhomogeneous equation
is a solution to the homogeneous linear equation with the same coefficient matrix,
and so we can apply the inequality to the difference appearing in the Definition \ref{def:upper_admissible}.
This proves that Definition \ref{def:upper_admissible} applies to the difference of a pair of solutions to any fixed linear equation.
For the case of nonlinear equations,
the difference of two solutions to the same inhomogeneous equation 
need not be a solution to the homogeneous equation with the same coefficient function.
However,
in case the nonlinear equation admits a linearization,
we can show that the difference is a solution to a homogeneous equation with similar structure. 
Precisely, we can prove the following.

\begin{proposition}
\label{prop-admissible-1}
Let $0 < \lambda \le \Lambda < \infty$.
Let $\Omega \subset \rn$ be a bounded Lipschitz domain and let $a$ be a $(\lambda,\Lambda)$-elliptic coefficient in $\Omega$.
Then,
in the sense of Definition \ref{def:upper_admissible}, 
there exists an admissible upper exponent $q = q(n,\Lambda/\lambda,\Omega) > 2$ and a finite constant $C= C(n,q,\Lambda/\lambda,\Omega)$ such that $N^{\Omega}_{h,bdr}(a,q) \le C$. 
\end{proposition}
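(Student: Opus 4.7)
The plan is to \emph{linearize} the identity
\[
\dive\bigl[a(x,\nabla u(x)) - a(x,\nabla v(x))\bigr] = 0
\]
satisfied by any pair $(u,v) \in \mathcal{U}(Q)$, thereby reducing matters to the linear Gehring estimate already proven in Proposition \ref{prop-gehring}. Setting $w := u - v \in W^{1,2}_0(\Omega)$ and
\[
A(x) := \int_0^1 D_\xi a\bigl(x, \nabla v(x) + t \nabla w(x)\bigr) \, dt,
\]
the fundamental theorem of calculus gives $a(x,\nabla u) - a(x,\nabla v) = A(x)\nabla w$, so the displayed identity becomes the \emph{linear} homogeneous equation $\dive(A(x)\nabla w) = 0$ in $O_Q \cap \Omega$, with unspecified boundary data on $\partial O_Q \setminus \partial \Omega$ --- exactly the setting of Proposition \ref{prop-gehring}.

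Next I would verify that $A$ is a linear $(\lambda,\Lambda)$-elliptic coefficient in $\Omega$ in the sense of Definition \ref{def:nonlinearity}. Measurability in $x$ is clear from the measurability of $D_\xi a(\,\cdot\,, \xi)$, its continuity in $\xi$, and the measurability of $\nabla u, \nabla v$. The key pointwise bounds on the Jacobian come from differentiating the structural hypotheses: strong monotonicity \eqref{strong-monotonicity} applied along $\xi = \xi_0 + t\eta$ and sent to $t \to 0$ gives $D_\xi a(x,\xi_0)\eta \cdot \eta \ge \lambda |\eta|^2$, and the Lipschitz condition \eqref{boundedness} similarly yields $|D_\xi a(x,\xi_0)\eta| \le \Lambda |\eta|$. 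Averaging in $t \in [0,1]$ transfers both bounds to $A(x)$, so $\xi \mapsto A(x)\xi$ is linear, bounded by $\Lambda$, and strongly monotone with constant $\lambda$, hence a valid $(\lambda,\Lambda)$-elliptic coefficient.

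With $A$ in hand, Proposition \ref{prop-gehring} delivers an exponent $q = q(n, \Lambda/\lambda, \Omega) > 2$ and a constant $C = C(q, n, \Lambda/\lambda, \Omega)$ governing a boundary $q$-weak reverse H\"older inequality for $1_\Omega |\nabla w|$. Because $w \in W^{1,2}_0(\Omega)$, its extension by zero obeys $|\nabla w| = 1_\Omega |\nabla w|$ almost everywhere, so taking suprema over $P \subset 2Q$ with $P \cap \Omega \ne \varnothing$, over $(u,v) \in \mathcal{U}(Q)$, and over $Q$ would give $N^{\Omega}_{h,bdr}(a,q) \le C$, which is the desired bound.

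The principal technical obstacle I anticipate is the mismatch between Proposition \ref{prop-gehring} as stated --- which records the reverse H\"older inequality only on the distinguished pair $(Q_0, 2Q_0)$ --- and the family of pairs $(P, 2P)$ with $P \subset 2Q$ required by Definition \ref{def:upper_admissible}. I would bypass this by appealing not to the statement but to the proof of Proposition \ref{prop-gehring}: the Caccioppoli--Sobolev--Poincar\'e step
\[
\la{1_\Omega |\nabla w|}_{P,2} \le \tfrac{2c_{n,\Omega}\Lambda}{\lambda} \la{1_\Omega |\nabla w|}_{3P, 2_*}
\]
is derived uniformly on every cube $P$ with $3P$ inside the region where the equation is available, and it is precisely this uniform local estimate that Gehring's lemma upgrades to a genuine reverse H\"older inequality on every such $P$. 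Choosing the reference cube appropriately relative to $O_Q$ (using that $O_Q$ strictly contains $(299/100)Q$) then covers the entire range of $P$ in Definition \ref{def:upper_admissible} and completes the argument.
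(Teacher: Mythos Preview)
Your proposal is correct and follows essentially the same approach as the paper: linearize via $A(x)=\int_0^1 D_\xi a(x,\nabla v+t\nabla w)\,dt$ so that $w=u-v$ solves $\dive(A\nabla w)=0$, verify that $A$ is a measurable $(\lambda,\Lambda)$-elliptic matrix, and invoke Proposition~\ref{prop-gehring}. Your verification of the ellipticity of $A$ and your remark on extracting the uniform-in-$P$ reverse H\"older inequality from the proof (rather than the stated conclusion) of Proposition~\ref{prop-gehring} are in fact more detailed than what the paper writes.
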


\begin{proof}
Let $u$ and $v$ be as in the definition of admissible exponent.
Then 
\[
\dive a(x,\nabla u(x)) - \dive a(x,\nabla v(x)) = 0
\]
in the weak sense.
In case $a$ is linear,
we immediately see that $u-v$ is a solution and the claim follows from the Meyers' estimate,
Proposition \ref{prop-gehring}.
In case $a$ is not necessarily linear,
we use the fact that $a$ is continuously differentiable in the second variable.
Then
\begin{multline*}
a(x, \nabla u) - a(x, \nabla v) \\
= \int_{0}^{1} \nabla_{\xi} a(x, t\nabla u(x) + (1-t)\nabla v(x))(\nabla u(x) - \nabla v(x)) \, dt,
\end{multline*}
and we note that $u-v$ is solution to  
\begin{align*}
    \dive A \nabla (u-v) &= 0 
\end{align*}
where 
\[
A(x) = \int_{0}^{1} \nabla_{\xi} a(x, t\nabla u(x) - (1-t)\nabla v(x)) \, dt
\]
is uniformly elliptic matrix with entries measurable in $x$. 
Indeed, $x \mapsto t\nabla u(x) + (1-t)\nabla v(x)$ is measurable and 
and the Jacobian matrix of $a$ is continuous.
Hence the composition is measurable. 
We can invoke Meyers' estimate from Proposition \ref{prop-gehring}
to complete the proof.
\end{proof}

\begin{proof}[Proof of Theorem \ref{intro-nl}]
By Proposition \ref{prop-energy-comparison},
$2$ is an admissible lower exponent.
By Proposition \ref{prop-admissible-1},
there exists an upper admissible exponent $q > 2$
so that $N^{\Omega}_{h,bdr}(a,q)$ is finite.
By Theorem \ref{thm-global-sparse-gen},
the claim follows. 
\end{proof}
 
\subsection{Linear equations}
In the linear setting,
we dispose over several powerful tools that we have not been able to use 
in the case of nonlinear equations.
As we focus on the linear problem,
we fix a matrix $A : \Omega \to \R^{n\times n}$
such that for all $x,\xi \in \rn$
\[
a(x,\xi) = A(x) \xi.
\]
Then our assumptions on the coefficient $a$ convert into the standard ellipticity and boundedness assumptions on the matrix $A$ with measurable coefficients.
 
\begin{proof}[Proof of Theorem \ref{intro-lin}]
By Theorem 1.5 in Kinnunen--Zhou \cite{MR1799417},
we see that any $q > 2$ is an admissible upper exponent
and $N^{\Omega}_{h,bdr}(A,q)$ is finite,
that is,
the estimates hold up to the boundary.
By Theorem 2.1 in Di Fazio \cite{MR1405255},
we see that any $p>1$ is an admissible lower exponent.
The claim follows by Theorem \ref{thm-global-sparse-gen}. 
\end{proof}

\begin{proof}[Proof of Corollary \ref{intro-lin-cor-vmo}]
Given $w \in A_2$,
there exists $\varepsilon = \varepsilon([w]_{A_2},n) \in (0,1)$ such that  
it holds $w \in A_{2-\varepsilon}$ (Corollary 9.2.6 \cite{MR2445437}) and $w \in \rm{RH}_{1+\varepsilon}$ 
with $[w]_{\rm{RH}_{1+\varepsilon}} = C(n,[w_{A_2}])$ (Theorem 9.2.2 \cite{MR2445437}).
Choose $q' > 1$ so that $2/q' > 2-\varepsilon$ and  
$(q/q')' = 1/(2-q')< 1+ \varepsilon$.
Concretely, we can set $q' = 2/(2-\varepsilon^{2})$.
By Theorem \ref{intro-lin}, a $(q',q')$ sparse estimate holds.
Using the fact 
\[
\no{f}_{L^{2}(\R^{n},wdx)} = \sup_{\no{g}_{L^{2}(w^{-1}dx)} \le 1} \abs{ \int f(x)g(x) \, dx },
\]
we can invoke Proposition \ref{prop-weigths-quote},
to conclude from the sparse estimate above that 
\[
\no{1_{\Omega}\nabla u}_{L^{2}(\R^{n},wdx)} 
\le   C(n,A,\Omega,[w]_{A_2}) \no{F}_{L^{2}(\R^{n},wdx)}.
\] 
As this bound holds for all weights $w \in A_2$,
we can apply the Rubio de Francia extrapolation theorem (Theorem 9.5.3 in \cite{MR2463316}).
We hence conclude that for all $p$ and all $w \in A_p$
\[
\no{\nabla u}_{L^{p}(\Omega,wdx)}  
\le C(n,p,A,\Omega,[w]_{A_p}) \no{F}_{L^{p}(\rn,wdx)} .
\]
Restricting the attention to $F$ vanishing in $\Omega^{c}$
and using density of $C_c^{\infty}(\Omega)$ in $L^{p}(\Omega,wdx)$,
we conclude the proof.  
\end{proof}
 
\section{Case of Dini coefficients}
\label{s:dini-coefficients}
In this section,
we discuss the case where $A : \Omega \to \R^{n \times n}$
is assumed to be of Dini mean oscillation as defined in the introduction.
We only see this assumption through the results quoted from \cite{MR3747493}.
The following Proposition follows by Lemma 2.11 of \cite{MR3747493}
and the well-known properties of reverse H\"older inequalities,
see Appendix B in \cite{MR3565414}.

\begin{proposition}
Let $\Omega$ be a $C^{2}$-domain and let $v \in W_{0}^{1,2}(\Omega)$ be a solution to 
\[
\dive A(x) \nabla v(x) = 0
\]
in $O_P \cap \Omega$ and set $v = 0$ in $\Omega^{c}$.
Then 
\begin{equation}
\label{holder-to-boundary}
\no{\nabla v}_{L^{\infty}(P \cap \Omega)} 
\le C \left(\sint_{2P} 1_{\Omega}(x)|\nabla v(x)|^{1/2}\,dx \right)^{2}
\end{equation}
for $C = C(\lambda,\Lambda,n,{\rm Dini}_A,\Omega)$.
\end{proposition}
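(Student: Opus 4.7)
The plan is the two-step scheme suggested by the surrounding text: first, extract from Lemma 2.11 of Dong--Kim a sup-bound for $|\nabla v|$ up to the boundary in terms of an $L^{q_0}$-average of $|\nabla v|$ for some fixed $q_0 > 0$ (for instance $q_0 = 2$), and second, use the standard self-improvement of reverse H\"older inequalities, as surveyed in Appendix B of \cite{MR3565414}, to lower the right-hand side exponent all the way down to $1/2$.

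For the first step, I would invoke Lemma 2.11 of \cite{MR3747493} applied to $v$ extended by zero across $\partial\Omega$, which is compatible with $v \in W^{1,2}_{0}(\Omega)$. Centered at any $x_0 \in \overline{P \cap \Omega}$ and at a radius $r$ comparable to $\ell(P)$, that lemma furnishes a pointwise estimate
\[
|\nabla v(x_0)| \le C \left(\sint_{B_r(x_0) \cap \Omega} |\nabla v(y)|^{q_0}\, dy\right)^{1/q_0}
\]
with $C$ depending only on $\lambda,\Lambda,n,\Omega$ and on the Dini data of $A$; here the $C^{2}$-regularity of $\partial\Omega$ is precisely what makes the estimate genuinely up to the boundary. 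Taking a supremum in $x_0 \in P \cap \Omega$ and, if necessary, a finite covering argument to swallow the union of the balls $B_r(x_0)$ into a slightly enlarged cube such as $\tfrac{3}{2}P$, one obtains the preliminary bound
\[
\|\nabla v\|_{L^{\infty}(P \cap \Omega)} \le C \left(\sint_{\tfrac{3}{2}P} 1_{\Omega}(y) |\nabla v(y)|^{q_0}\, dy\right)^{1/q_0}.
\]

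For the second step, I would run the classical argument for lowering the exponent on the right of a reverse H\"older inequality. For concentric cubes $Q_{\rho} \subset Q_{\sigma} \subset 2P$ and any $0 < s < q_0$, H\"older's inequality gives
\[
\sint_{Q_{\sigma}} 1_{\Omega} |\nabla v|^{q_0} \le \|\nabla v\|_{L^{\infty}(Q_{\sigma} \cap \Omega)}^{q_0 - s} \sint_{Q_{\sigma}} 1_{\Omega} |\nabla v|^{s}.
\]
Combining with the step-one bound applied to the pair $Q_{\rho} \subset Q_{\sigma}$ and then invoking Young's inequality to split the product produces an estimate of the form
\[
\Psi(\rho) \le \tfrac{1}{2} \Psi(\sigma) + C (\sigma - \rho)^{-n/s} \left(\sint_{2P} 1_{\Omega} |\nabla v|^{s}\right)^{1/s},
\]
where $\Psi(\rho) = \|\nabla v\|_{L^{\infty}(Q_{\rho} \cap \Omega)}$. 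The standard Giaquinta--Giusti iteration lemma then absorbs the first term on the right, and setting $s = 1/2$ yields the claimed bound with $2P$ on the right-hand side.

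The main technical obstacle is keeping the constants uniform in the position of $P$ relative to $\partial\Omega$. This is handled by the $C^{2}$-regularity of $\partial\Omega$, which allows one to flatten the boundary locally in a uniformly controlled chart and thereby transfer Dong--Kim's half-space gradient bound to the curved setting; far from $\partial\Omega$, the interior version of the same bound suffices. Once the base estimate of step one is available on concentric pairs, the reverse-H\"older self-improvement of step two is entirely classical.
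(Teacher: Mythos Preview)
Your proposal is correct and follows exactly the approach the paper takes: the paper simply states that the proposition ``follows by Lemma 2.11 of \cite{MR3747493} and the well-known properties of reverse H\"older inequalities, see Appendix B in \cite{MR3565414}.'' Your two-step scheme --- the Dong--Escauriaza--Kim sup-bound followed by the self-improvement of the right-hand exponent via the standard iteration --- is precisely this, with the details of Appendix B spelled out.
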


The second strong estimate we need is Theorem 1.9 from \cite{MR3747493}. 
\begin{proposition}
\label{prop:weak-type-DEK}
Let $\Omega$ be a $C^{2}$-domain and let $P$ be a cube. 
Let $F \in L^{2}(O_P \cap \Omega)$. 
Let $v \in W_{0}^{1,2}(O_P \cap \Omega)$ be a solution to 
\[
\dive A(x) \nabla v(x) = \dive F(x)
\]
in $O_P \cap \Omega$.
Assume $\omega_A(r) \le c |\log r|^{-2}$ for some $c$ and all $r \in (0,1/2)$.
Then for all $\mu > 0$ 
\begin{equation}
\label{weak-type}
|\{x \in O_P \cap \Omega : |\nabla v(x)| > \mu \}| \le \frac{C}{\mu} \int_{O_P \cap \Omega} |F(x)|\,dx
\end{equation}
for $C = C(\lambda,\Lambda,n,{\rm Dini}_A,\Omega)$.
\end{proposition}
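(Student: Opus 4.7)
The plan is to prove the weak-type $(1,1)$ bound by a Calderón--Zygmund decomposition of $F$, following the standard template for weak-type estimates for linear operators that send source data to solutions. Extend $F$ by zero outside $O_P \cap \Omega$ and perform a dyadic Calderón--Zygmund decomposition at height $\mu > 0$ to write $F = G + \sum_j b_j$ with $\no{G}_{L^\infty} \le C\mu$, $\no{G}_{L^1} \le \no{F}_{L^1}$, pairwise disjoint cubes $Q_j$, $b_j$ supported in $Q_j$ with $\int b_j = 0$ and $\no{b_j}_{L^1} \le C\mu|Q_j|$, and $\sum_j |Q_j| \le C\no{F}_{L^1}/\mu$. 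By linearity of the Dirichlet problem, decompose $v = v_G + \sum_j v_j$ where $v_G, v_j \in W^{1,2}_0(O_P \cap \Omega)$ solve the equation with source $\dive G$ and $\dive b_j$ respectively.

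The good part is controlled by Chebyshev's inequality and the $L^2$ energy estimate of Proposition~\ref{prop-energy-comparison}:
\[
|\{|\nabla v_G| > \mu/2\}| \le \frac{4}{\mu^2} \no{\nabla v_G}_{L^2}^2 \le \frac{C \no{G}_{L^\infty} \no{G}_{L^1}}{\mu^2} \le \frac{C \no{F}_{L^1}}{\mu}.
\]
Set $E = \bigcup_j 2Q_j$, whose measure is at most $C\no{F}_{L^1}/\mu$, so it can simply be absorbed into the bound. It remains to show that
$|\{x \in (O_P \cap \Omega) \setminus E : \sum_j |\nabla v_j(x)| > \mu/2\}| \le C\no{F}_{L^1}/\mu$,
for which by Chebyshev it suffices to prove $\sum_j \int_{(O_P \cap \Omega) \setminus 2Q_j} |\nabla v_j| \, dx \le C \no{F}_{L^1}$, and in turn to prove $\int_{(O_P \cap \Omega) \setminus 2Q_j} |\nabla v_j|\,dx \le C\no{b_j}_{L^1}$ uniformly in $j$.

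The main obstacle, and the step where the hypothesis $\omega_A(r) \le c|\log r|^{-2}$ enters, is this far-field estimate on $\nabla v_j$. On any ball or half-ball $B$ at distance comparable to $d(x, Q_j)$ from $Q_j$ (shrunk if needed so as to respect $\partial \Omega$ via the $C^2$ assumption), $v_j$ solves the homogeneous equation, so the $L^\infty$ gradient bound of \eqref{holder-to-boundary} gives a pointwise control of $|\nabla v_j(x)|$ by a scaled $L^{1/2}$ average. The cancellation $\int b_j = 0$ then gains an extra power $\ell(Q_j)/d(x,Q_j)$ together with a modulus-of-continuity factor coming from the Dini regularity of $\nabla v_j$: morally, representing $v_j$ via the Dirichlet Green's function $G_\Omega(x, \cdot)$, one obtains
\[
\nabla v_j(x) = \int_{Q_j} \big( \nabla_x \nabla_y G_\Omega(x, y) - \nabla_x \nabla_y G_\Omega(x, y_j)\big) b_j(y)\,dy
\]
for any center $y_j \in Q_j$, and the Dini condition on $A$ gives the requisite Dini-regular bound on the mixed second derivative of $G_\Omega$ in the $y$ variable.

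Integrating this kernel bound over dyadic annuli around $Q_j$ produces a series whose summability is exactly what $\omega_A(r) \le c|\log r|^{-2}$ is tailored to deliver, yielding $\int_{(O_P \cap \Omega) \setminus 2Q_j} |\nabla v_j| \,dx \le C \no{b_j}_{L^1}$. Summing over $j$ and combining with the good-part and $|E|$ bounds gives the stated weak-type inequality. The delicate point throughout is the construction and regularity of $G_\Omega$ up to the boundary; an alternative route, closer in spirit to Proposition~\ref{prop-gehring}, is to work directly via a Campanato/Morrey-type iteration on $\nabla v_j$ on dyadic shells, exploiting \eqref{holder-to-boundary} and a decay estimate on the oscillation of $\nabla v_j$ in balls where it solves the homogeneous equation, avoiding Green's functions altogether but still relying on the same quantitative Dini hypothesis to sum the resulting series.
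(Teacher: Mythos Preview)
The paper does not supply its own proof of this proposition: it is stated verbatim as Theorem 1.9 of Dong--Escauriaza--Kim \cite{MR3747493} and simply quoted. Your outline is essentially the argument carried out in that reference---Calder\'on--Zygmund decomposition of $F$, energy control of the good part, and a far-field decay estimate for the bad atoms using the Dini regularity of the adjoint problem (equivalently, of $\nabla_x\nabla_y G_\Omega$)---so in spirit you have reproduced the cited proof rather than diverged from the paper.

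A few remarks on the sketch itself. The decomposition $v = v_G + \sum_j v_j$ and the good-part bound are fine since $F\in L^2$ forces $G,b_j\in L^2$. The genuine work, as you correctly flag, is the far-field estimate $\int_{(O_P\cap\Omega)\setminus 2Q_j}|\nabla v_j|\,dx \le C\no{b_j}_{L^1}$: in \cite{MR3747493} this is obtained not through an explicit Green's function but via a duality argument against the adjoint equation, exploiting the $L^\infty$ gradient bound (their Lemma 2.11, which is your \eqref{holder-to-boundary}) for adjoint solutions together with the mean-zero condition on $b_j$. The extra hypothesis $\omega_A(r)\le c|\log r|^{-2}$ is used there precisely to sum the resulting dyadic series, as you anticipated. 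Your alternative Campanato-type route is morally equivalent but would require redoing much of the oscillation-decay machinery of \cite{MR3620893,MR3747493}; the duality argument is cleaner.
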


Using the proposition above,
we can prove a weak type (1,1) bound for an auxiliary operator.
This operator takes the place of the auxiliary maximal function of Lerner and Ombrosi \cite{MR4058547}.
Given a cube $Q$,
we denote by $u_Q$ the function such that as a restriction $u_Q \in W_{0}^{1,2}(O_Q \cap \Omega)$ 
is a solution to 
\[
\dive A(x)\nabla u(x) = \dive F(x)
\]
in $O_Q \cap \Omega$ and $u_{Q} = 0$ in $(O_Q \cap \Omega)^{c}$.

\begin{proposition}
\label{aux-max-prop}
Fix a cube $Q$.
For a cube $P \subset O_Q \cap \Omega$,
we write $w_P = u_Q-u_P$.
We define an auxiliary function for all $x \in 3Q$ as
\[
S(x) = \sup_{P \in \mathcal{D}(3Q)} 1_{P}(x) \sup_{y,y' \in P} |\nabla w_P(y)- \nabla w_P(y')|.
\] 
Then there exists a constant $C = C(\lambda,\Lambda,n,{\rm Dini}_A,\Omega)$ such that 
for all $\mu > 0$
\begin{equation}
\label{s-function}
|\{x \in O_Q \cap \Omega : S(x) > \mu  \}| \le \frac{C}{\mu} \int_{O_Q \cap \Omega} |F(x)| \, dx .
\end{equation}
\end{proposition}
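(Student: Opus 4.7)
The plan is to establish the weak $L^{1}$ bound by a Calder\'on--Zygmund decomposition of $F$, exploiting two features: the map $F \mapsto \nabla w_P$ is linear, so $S$ is sublinear in $F$; and each $w_P$ satisfies the homogeneous equation $\dive A \nabla v = 0$ on $O_P \cap \Omega$, because $u_Q$ and $u_P$ both carry the source $\dive F$ on that region. The latter ensures that the $L^{\infty}$ bound \eqref{holder-to-boundary} is available for every $w_P$ and, by linearity of the equation, for the pieces $w_P^G$ and $w_P^B$ obtained by splitting the source.

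Fix $\mu > 0$ and perform a Calder\'on--Zygmund decomposition $F = G + B$ of $F$ at a height proportional to $\mu$: this yields a pairwise disjoint family of cubes $\{Q_k\}$ in $O_Q$ with $\sum_k |Q_k| \le C\mu^{-1}\no{F}_{L^{1}}$, $B = \sum_k b_k$ with $b_k$ supported in $Q_k$, $\int b_k = 0$ and $\no{b_k}_{L^{1}} \le C\mu|Q_k|$, together with $\no{G}_{L^{\infty}} \le C\mu$. Set $\Omega^{*} = \bigcup_k 3Q_k$, so $|\Omega^{*}| \le C\mu^{-1}\no{F}_{L^{1}}$. By linearity $S \le S^G + S^B$. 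For the good part, \eqref{holder-to-boundary} applied to $w_P^G$ followed by Jensen's inequality and the triangle inequality yields
\[
\osc_P \nabla w_P^G \le C\left(\sint_{2P}|\nabla u_Q^G| + \sint_{2P}|\nabla u_P^G|\right).
\]
The first term is dominated by $M(|\nabla u_Q^G|)(x)$ for $x \in P$, and the second is controlled by the $L^{2}$ energy estimate (admissible lower exponent $q_l = 2$ via Proposition \ref{prop-energy-comparison}): $\sint_{2P}|\nabla u_P^G| \le C\no{G}_{L^{\infty}} \le C\mu$. Hence $S^G(x) \le C(M(|\nabla u_Q^G|)(x) + \mu)$. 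Since $\no{\nabla u_Q^G}_{L^{2}}^{2} \le C\no{G}_{L^{2}}^{2} \le C\mu\no{F}_{L^{1}}$, Chebyshev combined with the $L^{2}$-boundedness of $M$ gives $|\{S^G > 2C\mu\}| \le C\mu^{-1}\no{F}_{L^{1}}$.

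For the bad part we split $|\{S^B > C\mu\}| \le |\Omega^{*}| + |\{S^B > C\mu\} \cap (\Omega^{*})^{c}|$, and we use the subadditivity $S^B \le \sum_k S^{b_k}$. For $x \notin 3Q_k$ and a cube $P \ni x$, the function $w_P^{b_k}$ again satisfies the homogeneous equation on $O_P \cap \Omega$, so \eqref{holder-to-boundary} applies. When $Q_k \cap O_P = \varnothing$ we have $u_P^{b_k} \equiv 0$, so $w_P^{b_k} = u_Q^{b_k}$ is itself a solution of the homogeneous equation on a larger region. When $Q_k \subset O_P$ we combine \eqref{holder-to-boundary} with the cancellation $\int b_k = 0$ and the Dini regularity of $\nabla v$ for homogeneous solutions (available under the $|\log r|^{-2}$ assumption on $\omega_A$ via Dong--Escauriaza--Kim \cite{MR3747493}) to produce a quantitative oscillation gain scaling with $\ell(Q_k)/\ell(P)$. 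Summing over $k$ and treating the residual terms by applying \eqref{weak-type} to $u_Q^{b_k}$ yields $|\{S^B > C\mu\} \cap (\Omega^{*})^{c}| \le C\mu^{-1}\no{F}_{L^{1}}$, which completes the proof.

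The main obstacle is producing the sharp quantitative oscillation bound for $\nabla w_P^{b_k}$ in the bad part: the bare $L^{\infty}$ estimate \eqref{holder-to-boundary} is too crude to cash in the cancellation $\int b_k = 0$, and it must be upgraded to a Dini-modulus of continuity for $\nabla v$ on homogeneous solutions. The $|\log r|^{-2}$ decay of $\omega_A$ is precisely what guarantees that this modulus is summable in the tail $\sum_k$, mirroring the H\"ormander condition in the classical weak $(1,1)$ theory of Calder\'on--Zygmund operators, while the $C^{2}$-regularity of $\partial \Omega$ ensures that the relevant Dini regularity persists up to the boundary rather than only in the interior.
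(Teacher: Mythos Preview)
Your approach via Calder\'on--Zygmund decomposition of $F$ is a natural instinct, but it is both more laborious than the paper's argument and, as you yourself flag in the final paragraph, incomplete in the bad part. The paper bypasses the decomposition entirely. Since $w_P$ solves the homogeneous equation in $O_P \cap \Omega$, the estimate \eqref{holder-to-boundary} gives the pointwise bound
\[
S(x) \le C \sup_{P \ni x} \left(\sint_{2P} |\nabla w_P|^{1/2}\right)^{2}
\le C \sup_{P \ni x} \left(\sint_{2P} |\nabla u_P|^{1/2}\right)^{2} + C\, M\bigl(1_{O_Q \cap \Omega}|\nabla u_Q|^{1/2}\bigr)(x)^{2}.
\]
Kolmogorov's inequality together with \eqref{weak-type} applied in $O_P$ controls the first supremum by $C\,M(1_{O_Q}F)(x)$, so that $S(x) \le C\,M(F)(x) + C\,M(|\nabla u_Q|^{1/2})(x)^{2}$. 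The weak $(1,1)$ bound for $M$ handles the first term; for the second, boundedness of $M$ on $L^{2,\infty}$ reduces matters to $\||\nabla u_Q|^{1/2}\|_{L^{2,\infty}}^{2} = \|\nabla u_Q\|_{L^{1,\infty}}$, which is bounded by $C\|F\|_{L^{1}}$ by a second application of \eqref{weak-type}, now in $O_Q$.

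The gap you identify in your bad part is genuine. Extracting a decay factor of order $\ell(Q_k)/\ell(P)$ from the cancellation $\int b_k = 0$ requires pointwise regularity of the Green's function, not merely the reverse H\"older bound \eqref{holder-to-boundary}; carrying this out would amount to reproving Proposition~\ref{prop:weak-type-DEK} rather than invoking it. The paper's point is precisely to treat \eqref{weak-type} as a black box, and the Kolmogorov trick is what allows the $L^{1,\infty}$ information it provides to be fed back into the $L^{1/2}$ average on the right of \eqref{holder-to-boundary}. Your good-part argument is in fact close to this idea---you bound by maximal functions and use the $L^{2}$ energy estimate---and had you applied the same logic to the full $S$ with $F$ undecomposed, replacing the $L^{2}$ energy estimate by \eqref{weak-type} via Kolmogorov, you would have arrived at the paper's proof.
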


\begin{proof}
As always,
we extend all the functions appearing in the proof by zero to be globally defined functions.
Note that $w_P$ solves a homogeneous equation in $O_P \cap \Omega$.
By \eqref{holder-to-boundary},
for any $x \in O_Q \cap \Omega$ 
\begin{multline*}
S(x) \le C \sup_{P \in \mathcal{D}(3Q)} 1_{P }(x) \left( \sint_{2P} |\nabla w_P(y)|^{1/2} \, dy \right)^{2} \\
  \le C\sup_{P \in \mathcal{D}(3Q)} 1_{P }(x)\left( \sint_{2P} |\nabla u_P(y)|^{1/2} \, dy \right)^{2} + C M( 1_{O_Q \cap \Omega}|\nabla u|^{1/2}  )^{2}(x). 
\end{multline*}
By Kolmogorov's inequality (Exercise 2.1.5 in \cite{MR2445437}) and Proposition \ref{prop:weak-type-DEK} applied in $O_P$, the first term is bounded by maximal function of $F$.
Hence 
\begin{multline*}
\no{S}_{L^{1,\infty}(O_Q \cap \Omega)} \le C \no{M(1_{O_Q \cap \Omega}F)}_{L^{1,\infty}(O_Q \cap \Omega)} \\
+ C \no{M ( 1_{O_Q \cap \Omega} |\nabla u |^{1/2})^{2}}_{L^{1,\infty}(O_Q \cap \Omega)} .
\end{multline*}
Because the maximal function is bounded $L^{2,\infty}(\rn) \to L^{2,\infty}(\rn)$
and $L^{1}(\rn) \to L^{1,\infty}(\rn)$,
we conclude by an application of Proposition \ref{prop:weak-type-DEK} in $O_Q$
\begin{multline*}
\no{S}_{L^{1,\infty}(O_Q \cap \Omega)}  \le C\no{M(1_{O_Q \cap \Omega}F)}_{L^{1,\infty}(O_Q \cap \Omega)} + C  \no{  \nabla u }_{L^{1,\infty}(O_Q \cap \Omega)} \\
  \le C \int_{O_Q \cap \Omega} |F(x)| \, dx .
\end{multline*}
This completes the proof.
\end{proof}

Now we are in a position to prove the strong iteration lemma for $u$
solving an equation with Dini continuous coefficients.
We denote again
\[
L(Q) := \int_{Q} |\nabla u_Q(x)| g(x) \, dx .  
\]

\begin{lemma}
\label{lemma-iteration-step-smooth}
Let $\theta \in (0,1)$ and 
let $Q$ be a cube. 
Let $g\in L^{\infty}(3Q)$.
Then there exists a family $\mathcal{P}$ of pairwise disjoint cubes 
$P \subset 3Q$ such that  
\begin{equation}
\label{iteration-claimed-inequality-smooth}
L(Q)
\le C |Q| \la{|F|}_{3Q,1} \la{g}_{Q,1} + \sum_{P \in \mathcal{P}} L(P) 
\end{equation}
with $C =  C(\lambda,\Lambda,n,{\rm Dini}_A,\Omega,\theta)$
and 
\[
\abs{Q \setminus \bigcup \mathcal{P}} \ge (1-\theta) |Q|.
\]
\end{lemma}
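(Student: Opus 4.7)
The plan is to follow the Lerner--Ombrosi scheme \cite{MR4058547} adapted to the PDE setting, with the weak-type $(1,1)$ estimates of Proposition \ref{prop:weak-type-DEK} (for $\nabla u_Q$) and Proposition \ref{aux-max-prop} (for the auxiliary oscillation function $S$) playing the role of the corresponding singular integral estimates in the Calder\'on--Zygmund setting. First, I would set $D := K \theta^{-1} \la{|F|}_{3Q,1}$ with $K$ a large constant depending on $\lambda$, $\Lambda$, $n$, $\Omega$, and the Dini data of $A$, and define the bad set
\[
\mathcal{E} := \{x \in Q : M(1_{3Q}|F|)(x) > D\} \cup \{x \in Q : |\nabla u_Q(x)| > D\} \cup \{x \in Q : S(x) > D\}.
\]
The Hardy--Littlewood maximal theorem together with Propositions \ref{prop:weak-type-DEK} and \ref{aux-max-prop} give $|\mathcal{E}| \le C \|F\|_{L^1(3Q)}/D$, which for $K$ sufficiently large is at most $\theta |Q|/2$.

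Next I take $\mathcal{P}$ to be the family of maximal dyadic cubes $P \in \mathcal{D}(Q)$ with $|P \cap \mathcal{E}| > |P|/2$. The density condition gives $\sum_{P \in \mathcal{P}} |P| \le 2|\mathcal{E}| \le \theta |Q|$, so $\big|\bigcup \mathcal{P}\big| \le \theta|Q|$ as required. For $x \in Q \setminus \bigcup \mathcal{P}$, every dyadic cube in $\mathcal{D}(Q)$ containing $x$ has $\mathcal{E}$-density at most $1/2$, so by dyadic Lebesgue differentiation $x \notin \mathcal{E}$ for a.e.\ such $x$; in particular $|\nabla u_Q| \le D$ a.e.\ on $Q \setminus \bigcup \mathcal{P}$. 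Splitting
\[
L(Q) = \int_{Q \setminus \bigcup \mathcal{P}} |\nabla u_Q(x)| g(x) \, dx + \sum_{P \in \mathcal{P}} \int_P |\nabla u_Q(x)| g(x) \, dx ,
\]
the first term is bounded by $D|Q|\la{g}_{Q,1}$, giving the diagonal contribution. For each $P \in \mathcal{P}$, setting $w_P := u_Q - u_P$ (which solves the homogeneous equation on $O_P \cap \Omega$) and writing $\nabla u_Q = \nabla u_P + \nabla w_P$ on $P$ yields $\int_P |\nabla u_Q| g \le L(P) + \int_P |\nabla w_P| g$, and summing $L(P)$ over $\mathcal{P}$ produces the recursive term.

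The remaining step is the estimate $\sum_{P \in \mathcal{P}} \int_P |\nabla w_P| g \le C D |Q| \la{g}_{Q,1}$, which I would obtain from the pointwise bound $\|\nabla w_P\|_{L^\infty(P \cap \Omega)} \le CD$ for every $P \in \mathcal{P}$, combined with pairwise disjointness of $\mathcal{P}$. To prove the $L^\infty$ bound one combines: the boundary Dini estimate \eqref{holder-to-boundary} applied to the homogeneous solution $w_P$ on $O_P \cap \Omega$; Kolmogorov's inequality applied to Proposition \ref{prop:weak-type-DEK} on $O_P$ and $O_{\hat P}$ (where $\hat P$ is the dyadic parent of $P$), which yields $\bigl(\sint_{2P} |\nabla u_P|^{1/2}\bigr)^2 + \bigl(\sint_{2P} |\nabla u_{\hat P}|^{1/2}\bigr)^2 \le CD$ via the control $\la{|F|}_{3\hat P, 1} \le D$ that holds because $\hat P$ is not a stopping cube and hence contains a point $x_0 \notin \mathcal{E}$ at which $M(1_{3Q}|F|) \le D$; the oscillation control $\osc_{\hat P}(\nabla w_{\hat P}) \le S(x_0) \le D$ at the same $x_0$; and the decomposition $\nabla w_P = \nabla w_{\hat P} + \nabla (u_{\hat P} - u_P)$, with $u_{\hat P} - u_P$ another homogeneous solution on $O_P$ treated by the same boundary estimate. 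Assembling these pieces gives $L(Q) \le CD |Q| \la{g}_{Q,1} + \sum_{P \in \mathcal{P}} L(P)$, which with the definition of $D$ is the claim. The main obstacle is precisely this pointwise bound on $\nabla w_P$: because only weak-type $(1,1)$ is available for $\nabla u_Q$, no direct $L^1$ local estimate is at hand, and one must route through the oscillation information provided by $S$ together with the parent-cube comparison $u_{\hat P} - u_P$ to extract the required pointwise control.
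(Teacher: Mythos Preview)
Your setup (the bad set $\mathcal{E}$, the stopping family $\mathcal{P}$, the splitting into the off-$\mathcal{P}$ integral, the oscillation term $\sum_P\int_P|\nabla w_P|g$, and the recursive term $\sum_P L(P)$) matches the paper exactly. The issue is in the last step, the pointwise bound $\|\nabla w_P\|_{L^\infty(P)}\le CD$. Your four listed ingredients do not quite assemble: the oscillation bound $\osc_{\hat P}(\nabla w_{\hat P})\le S(x_0)\le D$ only converts into an $L^\infty$ bound on $\hat P$ (hence on $P$) once you control $|\nabla w_{\hat P}|$ at \emph{some} anchor point of $\hat P$. At your $x_0\notin\mathcal{E}$ you get $|\nabla u_Q(x_0)|\le D$, but you have no control of $|\nabla u_{\hat P}(x_0)|$ at that specific point; the Kolmogorov estimate you quote is only an $L^{1/2}$ average over $2P$ and says nothing about the value at $x_0$. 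So the chain breaks at $\|\nabla w_{\hat P}\|_{L^\infty(P)}\le CD$.

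The paper closes this gap more directly, without any detour through the parent cube. From maximality one has $|P\cap\Xi|\le\tfrac12|P|$ (since the parent fails the stopping criterion). Separately, the weak-type bound for $u_P$ and the control $\la{|F|}_{3P}\le \inf_{P}M(1_{3Q}F)\le D$ (there is a point of $P$ outside $\Xi$) give $|\{x\in P:|\nabla u_P(x)|>ND\}|\le\tfrac14|P|$ for a suitable $N$. These two sets together leave at least $\tfrac14|P|$ of room, so one can pick $x_P\in P\setminus\Xi$ with $|\nabla u_P(x_P)|\le ND$. At this single point all three of $|\nabla u_Q|$, $|\nabla u_P|$ and $S$ are $\le ND$, hence $|\nabla w_P(x_P)|\le (N+1)D$, and then
\[
\int_P |\nabla w_P|\,g \le |\nabla w_P(x_P)|\int_P g \;+\; \int_P|\nabla w_P-\nabla w_P(x_P)|\,g \le CD\int_P g + S(x_P)\int_P g,
\]
which is the desired bound. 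Your parent-cube route can be repaired by the same good-point-selection trick applied to $\nabla u_{\hat P}$, but once you do that the decomposition $w_P=w_{\hat P}+(u_{\hat P}-u_P)$ and the second application of \eqref{holder-to-boundary} become unnecessary.
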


\begin{proof} 
For brevity,
we write $u = u_Q$.
Consider the set  
\[
\Xi = \{ x \in Q : \max(|\nabla u(x)|,S(x),M(1_{3Q}F)(x)) > D \}  
\] 
where $D = 12^{n+1} \max(C_w,C_S,\|M\|_{L^{1}(\mathbb{R}^n)\rightarrow L^{1,\infty}(\mathbb{R}^n)}) \la{|F|}_{3Q,1}/\theta$
and $C_w$ is the constant from \eqref{weak-type}
and $C_S$ is the constant from \eqref{s-function}.
We let $\mathcal{P}$ be the family of maximal cubes $P \in \mathcal{D}(Q)$ such that 
\[
\frac{|P \cap \Xi|}{|P|} > 2^{-n-1}.
\]
Then 
\begin{equation}
\label{hp:cover}
\bigcup_{P \in \mathcal{P}} P \supset \Xi.
\end{equation}
Now
\[
|\bigcup \mathcal{P}|
\le \sum_{P \in \mathcal{P}} |P|
\le 2^{n+1}  \sum_{P \in \mathcal{P}} |P \cap \Xi|
= 2^{n+1} | \bigcup_{P \in \mathcal{P}} P \cap \Xi |
\le 2^{n+1} |\Xi|
\]
and by \eqref{weak-type}, \eqref{s-function} and the weak $(1,1)$ estimate of the maximal function,
\[
|\Xi| \le \frac{3(C_w + C_S +\|M\|_{L^{1}(\mathbb{R}^n)\rightarrow L^{1,\infty}(\mathbb{R}^n)})   }{D} \int_{3Q} |F(x)| \, dx \le \theta |Q|.
\]
Hence it remains to prove the claimed inequality \eqref{iteration-claimed-inequality-smooth} for $L(Q)$.

We write
\begin{multline*}
\int_{Q} |\nabla u(x)| g(x) \, dx 
\le \int_{Q \setminus \bigcup \mathcal{P}} |\nabla u(x)| g(x)\, dx \\
+ \sum_{P \in \mathcal{P}}  \int_{P} |\nabla u(x) - \nabla u_P(x)| g(x)\, dx 
+ \sum_{P \in \mathcal{P}}   \int_{P} | \nabla u_P(x)| g(x)\, dx \\
= \I + \II + \III.
\end{multline*}
Now by definition of the cubes $P$
\[
\I \le D  \int_{Q \setminus \bigcup \mathcal{P}}  g(x) \, dx  \le D |Q|  \la{g}_{Q,1}.
\] 

Next we estimate $\II$.
By maximality of $P$,
it always holds 
\begin{equation}
\label{max-density-cz-condition}
\frac{|P \cap \Xi|}{|P|} \le \frac{1}{2}.
\end{equation}
Let $N = 4 \cdot 3^{n} C_w$.
Recall that $u_P$ is identically zero in $(O_P \cap \Omega)^{c}$.
Hence by \eqref{weak-type}
\begin{multline*}
|\{ x \in P : |\nabla u_P(x)| > N D  \}| \\
\le |\{ x \in O_P \cap \Omega: |\nabla u_P(x)| > N D  \}| 
\le \frac{C_w}{D N} \int_{3P}|F(x)|\,dx  \\
\le \frac{3^{n}C_w}{D N} |P| \inf_{x \in P} M(1_{3Q}F)(x)
\le \frac{3^{n} C_w}{N} |P|
\le \frac{1}{4}|P|
\end{multline*}
where we used the fact $|P \setminus \Xi| \ge |P|/2$, following by \eqref{max-density-cz-condition}, 
to bound the maximal function.
Hence there exists $x_P \in P \setminus \Xi$
with 
\[
\max(|\nabla u_{P}(x_P)|,|\nabla u(x_P)|) \le N D.
\]

Denote $w_P = u_P - u$.
We write  
\begin{multline*}
\int_{P} |\nabla u_P(x) - \nabla u(x)| g(x) \, dx =
\int_{P} |\nabla w_P(x)|g(x) \,dx  \\
\le  |\nabla w_P(x_P)| \int_{P} g(x) \,dx +  \int_{P} |\nabla w_P(x) - \nabla w_P(x_P)|g(x) \,dx  .
\end{multline*}  
Here 
\begin{multline*}
\int_{P} |\nabla w_P(x) - \nabla w_P(x_P)|g(x) \,dx \le \inf_{x \in P} S(x) \la{g}_{P,1}|P|  \\
\le |P| S(x_P) \la{g}_{P,1} \le D |P| \la{g}_{P,1}
\end{multline*}
and 
\[
 |\nabla w_P(x_P)|
\le  |\nabla u_P(x_P)| +  |\nabla u(x_P)|
\le 2N D.
\]
This concludes the estimate for $\II$.

Finally,
by definition  
\[
\III = \sum_j L(P_j).
\]
This concludes the proof.   
\end{proof}

\begin{proof}[Proof of Theorem \ref{intro-smooth}]
The result follows by iterating Lemma \ref{lemma-iteration-step-smooth}.
See the iteration at the end of the proof of Theorem \ref{thm-nl-technical-version},
which is an identical argument.
\end{proof}

\begin{proof}[Proof of Corollary \ref{intro-lin-cor-hoelder}]
This follows at once by Theorem \ref{intro-smooth}
and Proposition \ref{prop-weigths-quote} with $r = 1 = s'$.
Indeed for $\sigma = w^{1-p'}$ and any $g \in L^{\infty}(\rn) \cap L^{p'}(\rn,\sigma dx)$,
we get 
\begin{multline*}
\int 1_{\Omega}|\nabla u(x)||g(x)|\, dx 
\le C \sum_{P} |P| \la{1_{3Q_0}|F|}_{3P,1} \la{1_{3Q_0}g}_{3P,1} \\
\le C[w]_{A_{p}}^{\max\left(1, \frac{1}{p-1}\right)} \no{1_{3Q_0}F}_{ L^{p}(\rn,w dx)}\no{g}_{ L^{p'}(\rn, \sigma dx)}.
\end{multline*} 
The claim follows by taking supremum over the family of all $g$ as above and satisfying $\no{g}_{L^{p'}(\rn,\sigma dx) } \le 1$.
\end{proof}

\section{Sparse forms}
We recall here how the sparse bounds imply weighted bounds.
Recall that a locally integrable $w \ge 0$ is in $RH_{s}$ with $s \ge 1$ 
if there is a constant $[w]_{RH_s}$ such that for all cubes $P \subset \rn$ 
\[
\left(\sint_{P} w(x)^{s} dx \right)^{1/s} \le [w]_{RH_s} \sint_{P} w(x) \, dx.
\] 
The following proposition is essentially Proposition 6.4 in \cite{MR3531367}.
Although the exact wording differs slightly from that in \cite{MR3531367},
an inspection of the proof quickly shows that this statement follows from the very same proof. 
For the reader's convenience,
we outline the short proof of the case $r=s'=1$ below.

\begin{proposition}
\label{prop-weigths-quote}
Assume $1 \le r \le 2 < s \le \infty$ and $\theta \in (0,1)$.
Let $p \in (r,s)$,
with the understanding $\infty ' = 1$ and $1' = \infty$.
Then there exists a finite constant $C$ such that the following holds.
If $\mathcal{P}$ is a $\theta$-sparse family,
then for all non-negative test functions $f,g \in C_c(\rn)$
\begin{multline*}
\sum_{P} |P| \la{f}_{3P,r} \la{g}_{3P,s'} \\
\le C ( [w]_{A_{p/r}}[w]_{RH_{(s/r)'}} ) ^{\beta} \no{f}_{L^{p}(\rn, wdx)} \no{g}_{L^{p'}(\rn, w^{1-p'}dx)}
\end{multline*}
where
\[
\beta = \max \left( \frac{1}{p-r}, \frac{s-1}{s-p} \right).
\] 
\end{proposition}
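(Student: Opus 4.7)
The plan is to handle the case $r = s' = 1$ (which is what is actually needed for Corollary \ref{intro-lin-cor-hoelder}) and to rely on the general statement in \cite{MR3531367} for arbitrary $r, s$. For $r = s' = 1$ we have $s = \infty$, $(s/r)' = 1$ and hence $[w]_{RH_{(s/r)'}} = 1$, and the target exponent is $\beta = \max(1, 1/(p-1))$. My strategy is to prove the case $p = 2$ directly with linear dependence on $[w]_{A_2}$, and then invoke Rubio de Francia extrapolation (Theorem 9.5.3 of \cite{MR2463316}), which automatically promotes this to the stated bound of order $[w]_{A_p}^{\max(1, 1/(p-1))}$ for all $p \in (1, \infty)$.

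For the $p = 2$ step I would set $\sigma = w^{-1}$ and first replace $|P|$ by $\theta^{-1}|E_P|$ using the sparseness hypothesis. The key is the cross-paired Radon--Nikodym expansion
\[
\la{f}_{3P} = \la{\sigma}_{3P}\la{fw}_{3P,\sigma}, \qquad \la{g}_{3P} = \la{w}_{3P}\la{g\sigma}_{3P,w},
\]
where $\la{h}_{3P,\mu} := \mu(3P)^{-1}\int_{3P}h \, d\mu$. For each $x \in E_P \subset P$, the dyadic threefold-enlarged maximal function $M_\mu$ with respect to the measure $\mu\, dx$ satisfies $\la{fw}_{3P,\sigma} \le M_\sigma(fw)(x)$ and $\la{g\sigma}_{3P,w} \le M_w(g\sigma)(x)$. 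Combining the pointwise $A_2$ inequality $\la{w}_{3P}\la{\sigma}_{3P} \le [w]_{A_2}$ with the disjointness of $\{E_P\}$, the sparse bilinear form is majorized by
\[
\theta^{-1}[w]_{A_2}\int_{\rn} M_\sigma(fw)(x) \, M_w(g\sigma)(x) \, dx.
\]
Factoring $1 = w^{1/2}\sigma^{1/2}$ (valid since $w\sigma \equiv 1$) and applying Cauchy--Schwarz splits this integral as $\no{M_\sigma(fw)}_{L^2(\sigma)} \, \no{M_w(g\sigma)}_{L^2(w)}$. The universal bound $\no{M_\mu}_{L^2(\mu)\to L^2(\mu)} \le C_n$, which holds for any positive locally finite Borel measure $\mu$, combined with the identities $\no{fw}_{L^2(\sigma)} = \no{f}_{L^2(w)}$ and $\no{g\sigma}_{L^2(w)} = \no{g}_{L^2(\sigma)}$, completes the $p = 2$ estimate.

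The main obstacle is precisely the cross-pairing in the Radon--Nikodym step: writing $\la{f}_{3P}$ against $\sigma$ (and not $w$) and $\la{g}_{3P}$ against $w$ (and not $\sigma$) is what allows the relation $w\sigma \equiv 1$ to collapse all residual weight factors and to produce exactly the target norms $\no{f}_{L^2(w)}$ and $\no{g}_{L^2(\sigma)}$. Without this swap one would be forced to apply Buckley's quantitatively sharp unweighted maximal bound twice, producing the suboptimal exponent $p/(p-1)$ in place of $1$ at $p = 2$ and losing sharpness entirely. Once the $p = 2$ bound is in hand, Rubio de Francia extrapolation delivers the bound for all $p \in (1, \infty)$ with the promised exponent $\max(1, 1/(p-1))$.
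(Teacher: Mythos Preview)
Your proposal is correct and follows essentially the same route as the paper: reduce to $p=2$ via Rubio de Francia extrapolation, set $\sigma=w^{-1}$, perform the cross-paired Radon--Nikodym decomposition, invoke the $A_2$ condition, use disjointness of the $E_P$, apply Cauchy--Schwarz, and finish with the universal $L^2(\mu)\to L^2(\mu)$ bound for the weighted maximal operator. The only presentational difference is that the paper first applies the Three Lattice Theorem to replace the averages over $3P$ by averages over genuinely dyadic cubes, so that the final maximal inequality is literally the dyadic one (Theorem 15.1 in \cite{MR4007575}); you instead work with a ``threefold-enlarged'' maximal function, whose $L^2(\mu)$ boundedness for arbitrary $\mu$ still implicitly rests on a Three Lattice type reduction.
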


We do not need the exact quantitative form except for the proof of Corollary \ref{intro-lin-cor-hoelder}.
In that case,
we have $r = s'=1$. 
We reproduce the simple proof modulo quoting some basic results
in this special case. 
First,
by the Three Lattice Theorem (Theorem 3.1 in \cite{MR4007575}),
it suffices to bound 
\[
\sum_{P} |P| \la{f}_{P,1} \la{g}_{P,1} = \int \left(\sum_{P} 1_{P}(x) \la{f}_{P,1}\right) g(x) \, dx 
\]
with all cubes $P$ in the sparse family being dyadic.
By the Rubio de Francia extrapolation theorem (Theorem 9.5.3 in \cite{MR2463316}),
it suffices to deal with the case $p = 2$. 
Denoting $\sigma = w^{-1}$,
we write  
\[
\la{f}_{P,1} = \frac{\sigma(P)}{|P|} \left( \frac{1}{\sigma(P)} \int_{P} f(x)\sigma(x)^{-1}  \sigma(x)\, dx  \right) 
\] 
and 
\[
\la{g}_{P,1} = \frac{w(P)}{|P|} \left( \frac{1}{w(P)} \int_{P} g(x)w(x)^{-1}  w(x)\, dx  \right) .
\] 
Here for a measurable set $E$ and a non-negative locally integrable function $v$,
we denote 
\[
v(E) = \int_{E} v(x) \, dx.
\]
Define  
\[
M^{\sigma}f(x) := \sup_{P \in \mathcal{D}} 1_P(x)\la{f}_{P}^{\sigma} 
:=  \sup_{P \in \mathcal{D}} \frac{1_{P}(x)}{\sigma(P)} \int_{P} f(x) \sigma(x) \, dx .
\]
Then the sparse form can be bounded as
\begin{align*}
\sum_{P} &|P| \la{f}_{P,1} \la{g}_{P,1} \\
 &\le \sum_{P} |E_P| \frac{w(P)\sigma(P)}{|P|^{2}} \inf_{x \in E_P} [M^{\sigma}(f\sigma^{-1})(x) \cdot M^{w}(gw^{-1})(x) ]\\
&\le [w]_{A_2} \sum_{P} \int_{E_P} M^{\sigma}(f\sigma^{-1})(x) M^{w}(gw^{-1})(x) \, dx \\
&= [w]_{A_2}  \int  M^{\sigma}(f\sigma^{-1})(x)w(x)^{-1/2} \cdot M^{w}(gw^{-1})(x)w(x)^{1/2} \, dx \\
&\le [w]_{A_2} \no{M^{\sigma}(f\sigma^{-1})}_{L^{2}(\rn,\sigma dx)}
\no{M^{w}(gw^{-1})}_{L^{2}(\rn,w dx)} \\
&\le 4 [w]_{A_2} \no{f\sigma^{-1}}_{L^{2}(\rn,\sigma dx)}\no{gw^{-1}}_{L^{2}(\rn,w dx)} \\
&= 4 [w]_{A_2} \no{f }_{L^{2}(\rn,w dx)}\no{g }_{L^{2}(\rn,\sigma dx)} .
\end{align*}
The penultimate inequality used the inhomogeneous dyadic maximal inequality,
which holds for all weights with constant $2$ (see for instance Theorem 15.1 from \cite{MR4007575}).

\bibliography{references}
\bibliographystyle{abbrv}

\end{document}